 \def \br {\overset{\circ}{B} } 
 \def \sr {S_c } 
\def\NoBlackBoxes{\overfullrule=0pt }
\theoremstyle{plain}
\newtheorem{theorem}{Theorem}
\newtheorem{proposition}{Proposition}
\newtheorem{lemma}{Lemma}
\newtheorem{definition}{Definition}
\newcommand\R{{\mathbb R}}
\newcommand\var{{\textnormal Var}}
\newcommand\E{\mathbb{E} }
\newcommand \PP{\mathbb{P}}
\title{ \bf Asymptotic formula for the tail of the maximum of smooth  Stationary Gaussian fields on non locally convex sets}
\author[1]{ Jean-Marc Aza\"\i s} 
\author[2]{Viet-Hung Pham}
\affil[1]{\small{Institut de Math\'{e}matiques de Toulouse\\ Universit\'{e} Paul Sabatier (Toulouse III), France}}
\affil[2]{\small{Department of Mathematics and Informatics\\ Hanoi National University of Education, Vietnam}}
\date{}
\begin{document} 
\maketitle

\abstract {In this paper we consider the distribution of the maximum of  a Gaussian field defined on non locally convex sets. Adler and Taylor or Aza\"\i s and Wschebor give the expansions in the locally convex case. The present paper generalizes their results to the non locally  convex case  by giving a full expansion in dimension 2 and some generalizations in  higher dimension. For  a given class of sets, a Steiner formula is established  and the correspondence between  this formula and the tail of the maximum is proved. The main tool is a recent result of Aza\"\i s and Wschebor that shows that under some conditions the excursion set is close to a ball with  a random radius. Examples are given in dimension 2 and higher.} \medskip

\textbf{Key-words:} Stochastic processes, Gaussian fields, Rice formula, distribution of the maximum, non locally convex indexed set.\\
\indent \textbf{Classifications:} 60G15, 60G60, 60G70.

\section{Introduction}
 Let $ \mathcal{X} = \{X(t) : t \in S\subset \R^n  \}$  be a random field with real values and let $M_S $ be its maximum (or supremum) on $S$.  Computing the distribution  of the maximum 
 is a very important issue  from the theoretical point of view and  also  has a great impact on applications, especially in spatial statistics. This problem has therefore received a great deal  of attention from many authors.

However
an exact result  is known only in very few cases, (see Aza\"{\i}s  and Wschebor \cite {5}).  In other cases,    the only available results are  asymptotic expansions or bounds  mainly   in the case of stationary Gaussian random fields. 

One of the most well-known and quite general methods is the "double-sum method", first  proposed by Pickands \cite{pic} and extended by Piterbarg \cite{10}, \cite{11}. The main idea of this method is to use the inclusion-exclusion principle and the Bonferroni inequality   after  dividing the parameter set into  suitable smaller subsets.  It was first proposed in dimension 1 : $n=1$  (in this case we use the classical  terminology of  "random processes" instead of "random field").  More precisely,  for   some particular processes, i.e.,  the "$\alpha$  processes", Pickands proposed  an equivalent  for the tail of the maximum.  However, the result depends  on some unknown constants, referred to as   Pickands' constants and just gives  an equivalent. 
 
Another method is the "tube method" proposed  by Sun \cite{13}. She observed that if the Karhunen-Lo\`{e}ve expansion of the field is finite in the sense that there exist a finite number of random variables $\xi_1,\ldots,\xi_k \overset{i.i.d}{\sim} \mathcal{N}(0,1)$ such that at every point $t$ in the parameter set, the value of the field at this point $X(t)$ can be expressed as
$$X(t)=a_t^1\xi_1+\ldots+a_t^k\xi_k,$$
where the vector $(a_t^1,\ldots,a_t^k)$ has unit  norm since $\textrm{Var}(X(t))=1$, then the original parameter set can be transformed into a subset of the unit sphere $\mathcal{S}^{k-1}$. She then  used  Weyl's tube formula to compute  the polynomial expansion of the volume of the tube around a subset of the unit sphere and derived  the asymptotic formula of the tail of the distribution of the maximum from this expansion. She is the first one who realizes the strong connection between the geometric functionals of the parameter set (the coefficients of the polynomial expansion) and the tail of the distribution. When the Karhunen-Lo\`{e}ve expansion is not finite, she uses a truncation argument to derive an asymptotic formula with two terms. Later on, this method was extended by Takemura and Kuriki  \cite{14}, \cite{15}.

In the 1940s, in his pioneering work, Rice \cite{rice} considered a stationary process $\mathcal{X}$  with  $\mathcal{C}^1$ paths defined on the  compact  interval $[0,T]$. He observed that for every level $u$:
\begin{displaymath}
\begin{array}{rl}
\displaystyle \PP\left(\underset{t\in [0,T]}{\max}X(t)\geq u\right) & \displaystyle \leq \PP(X(0)\geq u)+\PP\left(\exists t \in [0,T]:\, X(t)=u,X'(t)\geq 0\right)\\
& \displaystyle \leq \PP(X(0)\geq u)+\E\left(\textrm{card}\left\{ t \in [0,T]:\, X(t)=u,X'(t)\geq 0\right\}\right),\\
\end{array}
\end{displaymath}
 where the last expectation can be  evaluated  by the famous Rice-Kac formula. 
 This upper bound was later proved to be sharp by Piterbarg \cite{12}. This Rice-Kac formula is the starting point of the following methods dealing with the random fields: the  "Rice method" by Aza\"\i s and Delmas \cite{3}, \cite{8}, the "direct method" by Aza\"\i s and Wschebor \cite{5} and the  "Euler characteristic method" by Adler and Taylor \cite{1}. These methods use a multidimensional Rice-Kac formula : Generalized Rice formula (Aza\"\i s and Wschebor) or Metatheorem (Adler and Taylor).

In the direct method, Aza\"\i s and Wschebor used some results  from the random matrix theory to compute the  expectation of the absolute value  of the determinant of the Hessian that  appears  in the Rice formula.   They obtained an upper bound for the tail of the distribution depending  on some  geometric functionals of the parameter set. This upper bound is also  sharp.

Adler and Taylor combined differential and integral geometry to find the "Euler characteristic method" that gives one of most  frequently used  results in this area. They considered stratified sets, i.e. locally  convex Whitney stratified  manifolds.
First, they used the Metatheorem to compute the expectation of the Euler characteristic of the excursion set (see Theorem 12.4.1) and, second, they proved that the difference between the above expectation and the excursion probability (the tail of the distribution) is super exponentially smaller (see Theorem 14.3.3). Note that the geometric functionals of the parameter set appear in the expectation of the Euler characteristic under the name  of Lipschitz-Killing curvatures. 

We recall an important example when the parameter set $S$ is a convex body in $\R^2$ (compact, convex, with non-empty interior) and $\mathcal{X}$ is an isotropic centered Gaussian field defined on some  neighborhood of $S$   and satisfying  $\var(X(t) =1 $ and  $\textrm{Var}(X'(t))=I_n$, where $I_n$ is the identity matrix of size $n$.  Let us denote:
$$M_S =\max_{t\in S} X(t).$$
Then, under some regularity conditions, the Euler characteristic method gives:  \begin{equation}\label{exp1}
\PP(M_S\geq u)= \overline{\Phi}(u) + \frac{\sigma_1(\partial S)}{2\sqrt{2\pi}}\varphi(u)+\frac{\sigma_2(S)}{2\pi}u\varphi(u)+o\left(\varphi\left((1+\alpha)u\right)\right),
\end{equation} for some $\alpha >0$,
where $\overline{\Phi}(u)$ and $\varphi(u)$ are the tail distribution and the density of a standard normal variable, $\sigma_2(S)$ is the area of $S$ and $\sigma_1(S)$ is the perimeter of $S$. Note  that  the coefficient $1$ of the term $\overline{\Phi}(u)$ can be interpreted as the Euler characteristic of $S$.

 Adler and Taylor use the local convexity that can be defined as the fact that for every point $t\in S$, the support cone $C_t$
generated by the set of directions
 \begin{equation}\label{loco}
 \Biggl\{ \lambda \in \R^2 :\; \|\lambda\|=1,\, \exists s_n \in S \; \textnormal{such\; that}\; s_n\rightarrow t \; \textnormal{and} \; \frac{s_n-t}{\|s_n-t\|}\rightarrow \lambda \Biggr\},
 \end{equation}
is convex, plus some regularity conditions (see, for example \cite[Section 8.2]{1}) ($\|.\|$ is the Euclidean norm). Similarly, Aza\"\i s and Wschebor \cite[p. 231]{6} use the condition:
\begin{equation}\label{kap}
\kappa (S)= \underset{t\in S}{\sup} \underset{s\in S,\; s\neq t}{\sup} \frac{\textnormal{dist}(s-t,C_t)}{\|s-t\|^2}<\infty
\end{equation}
where dist  is the Euclidean distance. $1/\kappa(s) $ is called the reach (Federer \cite{federer}; Takemura and Kuriki  \cite{14}).

However none of these methods is able to provide a full expansion for the asymptotic formula in the non-locally convex cases, even the very simple case of $S$ being "the angle" that is the union of two segments with the angle $\beta\in (0,\pi)$, see Figure \ref{figu1},
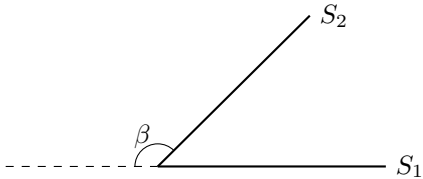
\begin{figure}[h]
\centering
\begin{tikzpicture}
\draw[thick] (0,0) -- (3,0) node[right]{$S_1$};
\draw[thick] (0,0) -- (2,2) node[right]{$S_2$};
\draw (-0.3,0) arc (180:45:0.3);
\draw (-0.2,0.4) node{$\beta$};
\draw[dashed] (-2,0)--(0,0);
\end{tikzpicture}
\caption{The angle, an example of non-local convexity.} \label{figu1}
\end{figure}
which is presented in \cite[Section 14.4.4] {1}. By a full expansion, we mean  a formula   of the type  (\ref{exp1}) with  three terms in dimension 2 and $n+1$ in the general case. 
 
We are therefore  interested in the following question:

\textit{"Can we find  some   full expansions  for the  tail of the maximum in  some  non-locally convex cases in dimension $2$ and higher?"} 

In a previous article  \cite{4}, we gave an upper bound for the tail of the distribution for quite general parameter sets $S$. More precisely,  if  $S$ is the Hausdorff limit of connected polygons $S_n$, if $\mathcal{X}$ is a stationary centered Gaussian field with variance $1$ and $\textrm{Var}(X'(t))=I_n$ defined on a neighborhood of $S$ then for every level $u$:

\begin{equation}\label{bou1}
\PP \{M_S \geq u\} \leq \overline{\Phi}(u)+ \frac{\liminf_n \sigma_1( S_n) \varphi (u)}{2\sqrt{2\pi}}+\frac{\sigma_2(S)}{2\pi }\left[ c\varphi (u\textrm{/}c)+u\Phi (u\textrm{/c})\right]\varphi (u),
\end{equation}
where $c=\sqrt{\textnormal{Var} (X''_{11}(t))-1}$,    $X''_{11}(t) =\frac{\partial^2 X(t)}{ \partial t^2_1}$ ,
   $\sigma_2$ is the area and $\sigma_1$ is the perimeter.

Note that (\ref{bou1}) can be applied to  polygons  taking the simpler form:
\begin{equation}\label{f:bou2}
\PP \{M_S \geq u\} \leq \overline{\Phi}(u)+ \frac{\sigma_1( S) \varphi (u)}{2\sqrt{2\pi}}+\frac{\sigma_2(S)}{2\pi }\left[ c\varphi (u\textrm{/}c)+u\Phi (u\textrm{/c})\right]\varphi (u),
\end{equation}
When the polygon is convex,  we can check that (\ref{f:bou2}) is sharp by comparing  (\ref{exp1}) and (\ref{f:bou2}).  However  we do not have such  information  in the non-convex case. 
 
Recently Aza\"\i s and Wschebor \cite{7} proposed a new method, still based on the generalized Rice formula, to derive the asymptotic formula when the parameter set $S$ is fractal. They also gave an asymptotic expansion with two terms in the case of a parameter set with a finite perimeter (defined as  an outer Minkowsky content). However, for example in dimension 2, this result does not give  the coefficient of 
 $\overline{\Phi}(u)$, which is the third term by  order of importance. 

 Section \ref{sect2}  is devoted to dimension 2.   We define a quite general class of parameter sets in $\R^2$ (see Definition \ref{d:2}) and derive the asymptotic formula for the tail of the maximum of the random fields defined on these parameter sets. This is our main result (Theorem \ref{theo}).  It shows that the coefficient corresponding to $\overline{\Phi}(u)$ is \textbf{not} always equal   to the Euler characteristic of the parameter set and, in fact, it is derived from the Steiner formula that gives the volume (area) of the tube around $S$. Here again, we emphasize the strong connection between the tube formula of the parameter set and the tail of the maximum.

In Section \ref{sect3}, we examine this connection by considering some examples.  We  use elementary geometry to compute the tube formula,  obtain the geometric functionals, and then  immediately   obtain the asymptotic expansion of  the tail distribution. All the examples correspond to new results. In particular, the examples in Subsection \ref{sub35} and \ref{sub36} could shed  new light on this problem. We also  conjecture that the strong connection still occurs in  dimensions  higher than $2$ and 3,  and even in fractal dimension. 

\subsection*{Hypotheses and notation}
 We will use the following assumption on the random field $\mathcal{X}$ throughout this  paper:
 
 \textbf{Assumption $A$: }$\mathcal{X}$ is a random field defined on a ball $B\subset \R^n$ satisfying:
\begin{itemize}
\item[i.] $\mathcal{X}$ is a stationary centered Gaussian field.
\item[ii.] Almost surely the paths of $X(t)$ are of class $\mathcal{C}^3$.
\item[iii.] $\textnormal{Var}(X(t))=1$ and $\textnormal{Var}(X'(t))$ is the identity matrix.
\item[iv.] For all $s\neq t \in B$, the distribution of $(X(s),X(t),X'(s),X'(t))$ does not degenerate.
\item[v.] For all $t\in B,\; \gamma \in \mathcal{S}^{n-1}$, the distribution of $(X(t),X'(t),X''(t)\gamma)$ does not degenerate.
\end{itemize}

We use the following  additional notation and hypotheses.
 \begin{itemize}
 \item $\mathcal{S}^{n-1}$ is the unit sphere in $\R^n$.
 \item $S$ is a compact subset of $B$ at a  positive distance from the boundary $\partial B$ and satisfies some regularity properties (see Definition \ref{d:2}).
 \item $B(t,r)$ is the ball of radius $r$ centered at $t$.
 \item $M_Z$ is the maximum of $X(t)$ on the set $Z \subset \R^n$.
 \item $S^{+\epsilon}$ is the tube around $S$ defined as:
 $$S^{+\epsilon}=\left\{t\in \R^n:\; \textnormal{dist}(t,S)\leq \epsilon\right\}.$$
 \end{itemize}
 
 \section{Main results}\label{sect2}

 Firstly, we define,  the class of parameter sets  $S$ that will be considered in dimension 2.
 
 \begin{definition}[\textbf{Two dimensional sets with piecewise-$\mathcal{C}^2$ boundary}]\label{d:2}
We assume that the compact set $S$ consists of a finite number of connected components of the same nature.  We describe in detail the case where  $S$  has only one connected component. $S$ contains two parts:
\begin{itemize}
\item {(i)} The core  $S_c$.  It  is a manifold with piecewise smooth boundary  of class $\mathcal{C}^2$  in the sense of Takemura and Kuriki  \cite{14} :  $S_c$ is  in a neighborhood  of every point  $t$  locally $\mathcal{C}^2$-diffeomorphic  to  a  section of a cone: the support cone defined by \eqref{loco}.  This cone  can be  $\R^2$ for interior points, a half space for  regular points of the boundary, a convex cone for irregular convex points or a cone  with convex complement for irregular concave points.  Note that this last case in excluded in \cite{14}. 
\item{(ii)}A finite set of disjoint self-avoiding  piecewise $\mathcal{C}^2$  "isolated" curves.  
Each curve is "attached" to $S_c$ by a unique point that can be a regular point or a convex irregular point. 
\end{itemize}
As a particular case  of the case above we include also the case where  the core $S_c$  is empty  and in this case  the second part must consist of a single isolated piecewise $\mathcal{C}^2$  curve.

\end{definition}

See Figures  \ref{figu1}  \ref{figu2}  \ref{figu3}  \ref{figrr}  and  \ref{figu10}  for examples of such sets. The boundary of  $S_c$ consist of a finite number of closed continuous piecewise $\mathcal{C}^2$ curves. One of these curves is the exterior boundary and the others are the boundaries of the holes inside $S$. Each of these curves is parameterized by its arc length using the positive orientation.  The "isolated" curves   are also parametrized by arc length with an unimportant orientation. 
 
 \begin{definition} [\textbf{Concave points and  angles}] \label{d:angles}

Irregular points are the points of the boundary of $S$  where the parametrization of the boundary is no longer $\mathcal{C}^2$. They divide the curves above into a finite number of  $\mathcal{C}^2$  \textit{edges}. An edge of  the boundary of $S_c$ will be referred to as \textit{non-isolated edge} and the other as  \textit{isolated edge}. To limit the number of configurations, we assume that an irregular point belongs to  one  of four  following categories:

\begin{itemize}
\item \textit{Convex binary points}: the intersection of two non-isolated edges and the support cone  defined by (\ref{loco}) is convex. See first example in Figure \ref{f:angles}.
\item \textit{Concave binary points}: as above but the support cone  has a complementary convex. Denote $\beta \in [0,\pi)$ as the discontinuity of the angle of the tangent. See second example in Figure \ref{f:angles}.

\item \textit{Angle points}: they are the intersection of two edges belonging to the same isolated curve. Denote $\beta \in [0,\pi)$ by the discontinuity of the angle (the orientation does not matter) as in Figure \ref{figu1}.
\item \textit{Concave ternary points}: the intersection of two non isolated edges $E_1,E_2$ and one isolated one $E_3$.  See third  example in Figure \ref{f:angles}.
In the main result, these points will be considered with multiplicity two. We associate  two concave angles to each of these points:

- $\beta_1$: the discontinuity of the angle of the tangent when we pass from $E_1$ to $E_3$.

- $\beta_2$: the discontinuity of the angle of the tangent when we pass from $E_3$ to $E_2$.

To obtain a rather simple result, we only consider the concave ternary points such that $\beta_1+\beta_2 \leq \pi$, and we exclude more complicated situations  such as point of order four or the existence of handles, for example. 

\end{itemize}

 Finally, the $\beta$'s described above will be referred to as \textbf{concave angles}.
\end{definition}
 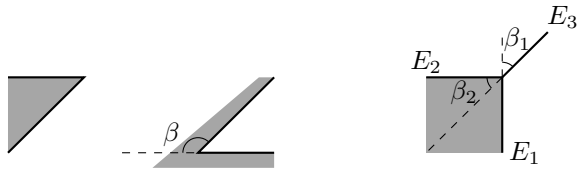
\begin{figure}[h]
\centering
\begin{tikzpicture}
\fill[fill=black!35!white] (-2.5,0)--(-1.5,1)--(-2.5,1)--(-2.5,0);
\draw[thick] (-2.5,0)--(-1.5,1)--(-2.5,1);

\fill[fill=black!35!white] (1,0)--(0,0)--(1,1)--(0.8,1)--(-0.6,-0.2)--(1,-0.2)--(1,0);
\draw[thick] (1,0)--(0,0)--(1,1);
\draw[dashed] (-1,0)--(0,0);
\draw (-0.2,0) arc (180:45:0.2);
\draw (-0.35,0.25) node{$\beta$};

\fill[fill=black!35!white] (4,0)--(4,1)--(3,1)--(3,0)--(4,0);
\draw[thick] (4,0)--(4,1)--(3,1);
\draw[thick] (4,1)--(4.6,1.6);
\draw(4.3,0) node{$E_1$};
\draw(3,1.2) node{$E_2$};
\draw(4.8,1.8) node{$E_3$};
\draw[dashed] (4,1)--(4,1.6);
\draw (4,1.2) arc (90:45:0.2);
\draw (4.2,1.5) node{$\beta_1$};
\draw[dashed] (4,1)--(3,0);
\draw (3.8,1) arc (-180:-135:0.2);
\draw (3.5,0.8) node{$\beta_2$};

\end{tikzpicture}
\caption{Convex, concave binary and concave ternary points, respectively.} \label{f:angles}  
\end{figure}


\textbf{Remark.} It should be observed that the sets with piecewise-$\mathcal{C}^2$ boundary considered here are Whitney stratified manifolds in the sense of \cite[Section 8.1]{1} with some additional restrictions. We refer readers to this book for more details. \medskip

 Our proof will be hereditary proof. We will start  from   a set without concave points and use  the result recalled in the Appendix  to establish an expansion for such a set. We will  then  proceed by union to   extend the result    to the general class  of sets  of Definition 1. To do so, we need  a definition of the property that will be extended by  union. This is the object of the following definition. 

%
\begin{definition}[\textbf{Steiner formula heuristic property}] \label{d:1}A compact subset $S$ of $\R^2$ is said to satisfy the Steiner formula heuristic (SFH) if it satisfies the following conditions:
\begin{itemize}
\item There exist two non-negative constants $L_1(S)$ and $L_0(S)$ such that, as $\epsilon$ tends to $0$,
\begin{equation} \label{ste}
\sigma_2(S^{+\epsilon})=\sigma_2(S)+\epsilon L_1(S)+\pi\epsilon^2 L_0(S)+o(\epsilon^2).
\end{equation}
\item For all processes $X(t)$ satisfying Assumption $A$, 
\begin{equation} \label{ct}
\PP(M_S\geq u)=\sigma_2(S)\frac{u\varphi(u)}{2\pi} + L_1(S)\frac{\varphi(u)}{2\sqrt{2\pi}} + L_0(S)\overline{\Phi}(u)+o\left(u^{-1}\varphi(u)\right),
\end{equation}
as $u\to \infty$.
\end{itemize}
\end{definition}
\textbf{Remarks.} 
\begin{itemize}
\item[1.] There exist some generalizations of the Steiner formula that hold true for every closed set, see \cite{hl}. The present form is more restrictive. 
\item[2.] If $S$ is a convex body, then (\ref{ste})  will take the form  : for all $\epsilon >0$
\begin{equation}\label{stein}
\sigma_2(S^{+\epsilon})=\sigma_2(S)+\epsilon L_1(S)+\pi\epsilon^2 L_0(S).
\end{equation}
$L_1(S)$ is just the perimeter: $\sigma_1(S)$  and $L_0(S)$ is the Euler characteristic of $S$ which is equal to $1$.

If, in addition, the number of irregular points of $S$  (points where the support  cone is not a half space) is finite, then on the basis of  the result of Adler and Taylor, (\ref{ct}) follows. Thus  a convex body with a finite number of irregular points satisfies the SFH property.
\item[3.] If $S$ has a positive reach in the sense that there exists a positive constant $r$ such that for all $t\in S^{+r}$, $t$ has only one projection on $S$, then (\ref{stein}) is true for all $\epsilon< r$ (see \cite{2}, \cite{federer}). Moreover, if, in addition,  $S$ is a set with a  piecewise-$\mathcal{C}^2$ boundary in the sense of Definition \ref{d:2} then it satisfies $\kappa(S)<\infty$ (where $\kappa(S)$ is defined in (\ref{kap})) and  (\ref{ct}) still holds true (see Appendix).
\item[4.] In the most general cases, the constant $L_1(S)$ is the outer Minkowski content of $S$ ($\textnormal{OMC}(S)$), which is defined, when it exists, by:
$$\sigma_2(S^{+\epsilon})=\sigma_2(S)+\epsilon \textnormal{OMC}(S)+o(\epsilon).$$ 
For more details, see \cite{2}. It corresponds to the definition of the perimeter of a set in convex geometry. It can differ from the length of the boundary of $S$, for example in the case of "the square with whiskers" (see Figure \ref{figu2}).

\begin{figure}[h]
\centering
\begin{tikzpicture}
\draw (0,0) rectangle (2,2) ;
\draw (2,1) -- (3,1);
\draw (0,1) -- (-1,1);
\end{tikzpicture}
\caption{The square with whiskers.} \label{figu2}
\end{figure}
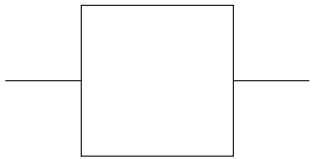

 In this last  case, the length of the boundary is equal to the perimeter of the square plus the length of the whiskers, while $\textnormal{OMC}(S)$ is equal to the perimeter of the square plus {\bf  two times} the length of the whiskers. In addition it should be noticed that $L_0(S)$ is not always equal to the Euler characteristic (see Subsection 3.4). 
 \end{itemize}

 We are  now able to state our main result.
 \begin{theorem}\label{theo} Let $S$ be a compact subset in $\R^2$ with  a piecewise-$\mathcal{C}^2$ boundary and with concave angles $\beta_1,\ldots,\beta_k$ as defined in Definition \ref{d:angles}. Let $\mathcal{X}$ be a random field satisfying Assumption $A$. Let $M_S$ be the maximum of $X(t)$ on $S$.

 Then $S$ satisfies the SFH, more precisely:
$$\sigma_2(S^{+\epsilon})=\sigma_2(S)+\textrm{OMC}(S)\epsilon+\left[ \pi \chi(S)-\sum_{i=1}^k \left( \tan\frac{\beta_i}{2}-\frac{\beta_i}{2}\right)\right]\epsilon^2+o(\epsilon^2),$$ 
  and
\begin{equation}\label{f:th}
\PP(M_S\geq u)=\frac{\sigma_2(S)}{2\pi}u\varphi(u)+\frac{\textnormal{OMC}(S)}{2\sqrt{2\pi}}\varphi(u)+ \left[\chi(S)-\frac{1}{\pi}\sum_{i=1}^k \left(\tan\frac{\beta_i}{2}-\frac{\beta_i}{2} \right)\right]\overline{\Phi}(u) + o\left(u^{-1}\varphi(u)\right),
\end{equation}
where $\chi(S)$ is the Euler characteristic of $S$ that is equal to the number of connected components minus the number of holes.

In addition, the outer Minkowski content $\textnormal{OMC}(S)$ is equal to the length of the non-isolated edges plus twice the length of the isolated edges.
\end{theorem}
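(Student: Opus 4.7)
The plan is to follow the hereditary strategy sketched just before Definition \ref{d:1}: start from sets in the class of Definition \ref{d:2} that have no concave angles, for which the locally-convex result recalled in the Appendix already gives SFH with $L_0=\chi(S)$ and $L_1$ equal to the boundary length, and then extend by a finite sequence of union steps, each introducing one additional concave binary, angle, or concave ternary point.

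First I would prove the purely geometric statement (\ref{ste}) by direct computation of the tube area. Away from irregular points the boundary decomposes into smooth arcs, and the $\epsilon$-strips produce $\epsilon\cdot(\text{length of non-isolated edges})+2\epsilon\cdot(\text{length of isolated edges})$, matching $\textnormal{OMC}(S)$ as stated. For the $\pi\epsilon^2$ term, each irregular point contributes a quantity obtainable by a polar-coordinate integration: a convex binary vertex of interior angle $\alpha$ produces the usual sector $\tfrac{1}{2}(\pi-\alpha)\epsilon^2$, while at a concave binary vertex of angle $\beta$ the two edge-strips overlap within the exterior wedge of angular width $\pi-\beta$, and a short calculation gives an overlap area of $\epsilon^{2}[\tan(\beta/2)+\pi/2-\beta]$. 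Angle points and concave ternary points are treated analogously. Summing the convex-sector contributions, whose signed total equals $2\pi\chi(S)+\sum_i\beta_i$ by the piecewise Gauss--Bonnet identity, with the overlap corrections at the concave points, one recovers exactly $L_0(S)=\chi(S)+\tfrac{1}{\pi}\sum_i(\beta_i/2-\tan(\beta_i/2))$.

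For the probability expansion (\ref{ct}), I would decompose $S=S_1\cup S_2$ with both pieces lying in the class of Definition \ref{d:2} and containing fewer concave angles than $S$ (so that SFH for them is already known by induction, or by the Appendix base case), apply inclusion-exclusion, and split the joint event as
\begin{equation*}
\PP(M_{S_1}\ge u,\,M_{S_2}\ge u)=\PP(M_{S_1\cap S_2}\ge u)+R(u),\qquad R(u):=\PP(M_{S_1}\ge u,\,M_{S_2}\ge u,\,M_{S_1\cap S_2}<u).
\end{equation*}
Substituting the inductive SFH expansions for $S_1$, $S_2$ and $S_1\cap S_2$, together with the additivity of $\chi$, $\sigma_2$ and (for decompositions where $S_1,S_2$ overlap in a two-dimensional piece) of $\textnormal{OMC}$ under inclusion-exclusion, reduces (\ref{ct}) for $S$ to the statement that $R(u)=\tfrac{1}{\pi}(\beta/2-\tan(\beta/2))\overline{\Phi}(u)+o(u^{-1}\varphi(u))$ per newly created concave angle $\beta$, in perfect correspondence with the Steiner-formula correction computed above.

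The main obstacle is this last asymptotic evaluation of $R(u)$. Here I would use the Aza\"\i s--Wschebor near-ball description of the excursion set cited in the abstract: with probability $1-o(u^{-1}\varphi(u))$, $\{X\ge u\}$ is a topological disk very close to a Euclidean ball of random radius $r\sim 1/u$. Under this approximation, the event in $R(u)$ forces the ball to meet both edges emanating from the newly created concave vertex $v$ without covering $v$, which by the elementary geometry sketched for the Steiner side pins down the ball centre to lie on the bisector of the exterior wedge at distance $a\in(r,\,r/\cos(\beta/2))$ from $v$ --- an arc of length $r(\sec(\beta/2)-1)$. Integrating a Rice-type density of local maxima of $X$ at level $u$ over this admissible set of centres (and over $r$) should produce exactly the coefficient $\tfrac{1}{\pi}(\beta/2-\tan(\beta/2))$. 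The delicate point will be to justify the reduction to the near-ball picture \emph{uniformly} in $u$, i.e.\ to control disconnected excursions and other exotic configurations with error strictly smaller than $u^{-1}\varphi(u)$; this is exactly what the non-degeneracy conditions (iv)--(v) of Assumption $A$ together with the $\mathcal{C}^3$ regularity are designed for.
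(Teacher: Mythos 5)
Your overall architecture --- induction on the number of concave points, the Appendix result as base case, the Steiner formula computed by elementary geometry and Gauss--Bonnet bookkeeping --- is the same as the paper's, and that part of your plan is sound. The structural difference is in the union step: you cut $S$ into \emph{two} pieces meeting along a curve $\Gamma=S_1\cap S_2$ through the concave vertex and push the whole correction into $R(u)=\PP(M_{S_1}\ge u,\,M_{S_2}\ge u,\,M_{\Gamma}<u)$, whereas the paper cuts into three (or four) pieces arranged so that the only joint probabilities ever computed from scratch are those of sets whose $\epsilon$-tubes intersect in an area of order $\epsilon^{2}$ (point-like intersections, the case $d=0$ of Lemma \ref{le1}); the pairwise probabilities along curve-like cuts are never evaluated directly but eliminated algebraically through the SFH of the unions $S_i\cup S_j$, which hold by induction. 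Two smaller issues first: since $\beta/2-\tan(\beta/2)\le 0$ while $R(u)\ge 0$, your claimed asymptotics for $R(u)$ has the wrong sign (with your bookkeeping you need $R(u)=\tfrac{1}{\pi}\left(\tan(\beta/2)-\beta/2\right)\overline{\Phi}(u)+o(u^{-1}\varphi(u))$); and the admissible set of ball centres in your near-ball picture is a two-dimensional region of area of order $r^{2}$, not an arc of length $r(\sec(\beta/2)-1)$.

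The genuine gap is the precision required on the two-set joint probability at a concave \emph{binary} point, where $S$ is two-dimensional near $P$ and any essential two-set partition shares a one-dimensional cut. Writing $R(u)=\PP(M_{S_1}\ge u,\,M_{S_2}\ge u)-\PP(M_{\Gamma}\ge u)$ (the last event implies the first two), you must evaluate $\PP(M_{S_1}\ge u,\,M_{S_2}\ge u)$ with absolute error $o(u^{-1}\varphi(u))$ even though its leading term, governed by $\sigma_2(S_1^{+\epsilon}\cap S_2^{+\epsilon})=2\sigma_1(\Gamma)\epsilon+C\epsilon^{2}+o(\epsilon^{2})$, is of order $\varphi(u)$; that is, you need a two-term refinement of Lemma \ref{le1} with \emph{relative} error $o(u^{-1})$, while the lemma only provides relative error $o(1)$. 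The sandwich $B(t,\underline{r})\subset K_u\subset B(t,\overline{r})$ cannot deliver this: the admissible-centre areas for the upper and lower bounds differ by roughly $2\sigma_1(\Gamma)(\overline{r}-\underline{r})$, and carrying this through the Rice integral as in Subsection \ref{sub35} produces a discrepancy of order $u^{\alpha-1}\varphi(u)$, which is $o(u^{-1}\varphi(u))$ only if $\alpha<0$ --- impossible since $\alpha\in(0,1)$. (In Subsection \ref{sub35} the analogous control succeeds only because the relevant exponent is $3/4$ rather than $1/2$ and only down to the coarser scale $u^{-1/2}\varphi(u)$.) Working instead with the set difference $(S_1^{+r}\cap S_2^{+r})\setminus\Gamma^{+r}$ does not help, because the inclusions force $\overline{r}$ on the intersection and $\underline{r}$ on the excluded tube, so the order-$r$ terms no longer cancel. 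The fix is exactly the paper's device: prolong the two tangents at $P$ to cut $S$ into $S_1,S_2,S_3$ so that $S_1\cap S_3$ is the single point $P$, and apply Lemma \ref{le1} only to $\PP(M_{S_1}\ge u,\,M_{S_3}\ge u)$ and $\PP(M_{S_1}\ge u,\,M_{S_2}\ge u,\,M_{S_3}\ge u)$, for which $d=0$ and the available error control suffices.
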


 For an illustration of this theorem, see  the examples in Section \ref{sect3}.  Because of Naiman's inequality (see   Naiman \cite{nai} or Johnstone and Sigmund \cite{JS}), the volume of the tube   is always smaller than   it would be in the case  of locally convex sets. As a consequence  the correction term  to the Euler characteristics is always non-positive.

Our starting point in this paper is the following lemma that extend  the  ideas of Aza\"\i s and Wschebor \cite{5} (see Lemma \ref{leaz})   by considering  several sets. 
\begin{lemma}\label{le1} Let $\mathcal{X}$ be a random field satisfying Assumption $A$ and $S_1,\ldots, S_m$ be $m$ subsets of $B$ at  a positive distance from $\partial B$. Assume that there exist two constants $C>0$ and $0\leq d<n$ such that:
\begin{equation}\label{inters}
\sigma_n\left(S_1^{+\epsilon}\cap \ldots \cap S_m^{+\epsilon}\right)= \left(C+o(1)\right)\epsilon^{n-d} \; as \; \epsilon \rightarrow 0,
\end{equation}
where $\sigma_n$ is the Lebesgue  measure in $\R^n$.
Then, as $u\rightarrow +\infty$,
\begin{equation}\label{fracta}
\PP\left(\forall i=1\ldots m : \;M_{S_i}\geq u\right)=u^{d-1}\varphi(u) \left(\frac{C}{2^{d/2}\pi^{n/2}}\Gamma\left(1+(n-d)/2\right)+o(1)\right),
\end{equation}
where $\Gamma$ is the Gamma function.
\end{lemma}
 This lemma is proved  in  Section \ref{s:le1}.
The main idea of the proof of the main theorem  is to use  the inclusion-exclusion principle to compute the probability of the union of events  $\{M_{S_i} >u\}$  through  Lemma \ref{le1}   that gives probability of the intersection of some of them. Let us give  a simple introductory example.  Suppose that  $S=S_1\cup S_2$  with 
$S_1$ and $S_2$ satisfy the SFH as in Definition \ref{d:1}. Suppose, in addition, that  the condition (\ref{inters}) is met, i.e.,
$$\sigma_2(S_1^{+\epsilon}\cap S_2^{+\epsilon})=\left(C+o(1)\right)\epsilon^2.$$
Then,  using Lemma \ref{le1},  we have an  expansion of  $\PP\left(M_{S_1}\geq u,\, M_{S_2}\geq u\right)$  and, by consequence,  an expansion of $\PP(M_S\geq u)$ with  an  error of  $o(u^{-1}\varphi(u))$.

However, in general, we need to  decompose   $S$  into three or even  four sets.  The next lemma is the basis of our method.
 It shows that the Steiner formula heuristic property (SFH)  is heredity in the sense that: if we start from some subsets in $\R^2$ satisfying the SFH, then under some conditions, the union of these subsets also satisfies the SFH. Therefore, to prove the main theorem, we just prove that the considered parameter set can be expressed as the union of the subsets satisfying the SFH.

\begin{lemma}\label{le2} Let $S_1,\;S_2,\; S_3$ and $S_4$ be four compact subsets in $\R^2$ such that:
\begin{itemize}
\item[1.)] For every $i=1,2,3,4$, $S_i$ satisfies the SFH .
\item[2.)] $S_1\cup S_2,\; S_2\cup S_3,\; S_3\cup S_4,\;$ and $S_4\cup S_1$ satisfy the SFH.
\item[3.)] $S_2\cap S_4=\emptyset$ and $S_1\cap S_3 \cap S_4=\emptyset$.
\item[4.)] As $\epsilon$ tends to $0$, there exist two positive constants $C_{13}$ and $C_{123}$ such that
\begin{equation}\label{con}
\sigma_2(S_1^{+\epsilon}\cap S_3^{+\epsilon})= \left(C_{13}+o(1)\right) \epsilon^2 \;\; \textnormal{and} \;\; \sigma_2(S_1^{+\epsilon}\cap S_2^{+\epsilon}\cap S_3^{+\epsilon})= \left( C_{123}+o(1)\right) \epsilon^2.
\end{equation}
\end{itemize}
Then $S=S_1\cup S_2 \cup S_3 \cup S_4$ also satisfies the SFH and:
 \begin{displaymath}
 \begin{array}{rl}
- \indent L_1(S)=&\displaystyle L_1(S_1\cup S_2)+L_1(S_2\cup S_3)+L_1(S_3\cup S_4)+L_1(S_4\cup S_1)- \sum_{i=1}^4L_1(S_i),\\
- \indent L_0(S)=&\displaystyle L_0(S_1\cup S_2)+L_0(S_2\cup S_3)+L_0(S_3\cup S_4)+L_0(S_4\cup S_1)- \sum_{i=1}^4L_0(S_i)+\frac{C_{123}-C_{13}}{\pi}.\\
 \end{array}
 \end{displaymath}
\end{lemma}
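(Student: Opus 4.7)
The plan is to verify both items of the SFH for $S = S_1 \cup S_2 \cup S_3 \cup S_4$ by applying inclusion--exclusion twice in parallel: once to the tube $S^{+\epsilon} = \bigcup_i S_i^{+\epsilon}$ for the Steiner formula, and once to the event $\{M_S \geq u\} = \bigcup_i \{M_{S_i} \geq u\}$ for the tail probability. In both computations the key algebraic trick will be, for each of the four \emph{adjacent} pairs $(i,j) \in \{(1,2),(2,3),(3,4),(4,1)\}$, to eliminate the pairwise intersection in favour of the pairwise union using the identities
$$\sigma_2(A \cap B) = \sigma_2(A) + \sigma_2(B) - \sigma_2(A \cup B), \qquad \PP(A \cap B) = \PP(A) + \PP(B) - \PP(A \cup B),$$
applied respectively to $A = S_i^{+\epsilon}, B = S_j^{+\epsilon}$ and to $A = \{M_{S_i}\geq u\}, B = \{M_{S_j}\geq u\}$. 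This reduces every term either to a quantity whose expansion is given by hypothesis (the SFH for each $S_i$ and each adjacent union) or to a small, controllable list of leftover multiple-intersection terms.

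For the Steiner formula, the leftovers are the diagonal pairs $(1,3)$ and $(2,4)$, the four triples and the quadruple. Compactness together with $S_2 \cap S_4 = \emptyset$ makes every intersection of tubes that contains the pair $(2,4)$ empty for small enough $\epsilon$, while $S_1 \cap S_3 \cap S_4 = \emptyset$ forces $S_1^{+\epsilon} \cap S_3^{+\epsilon} \cap S_4^{+\epsilon}$ to be empty for small $\epsilon$ as well. The only surviving leftovers are the diagonal $(1,3)$ contributing $(C_{13}+o(1))\epsilon^2$ and the triple $(1,2,3)$ contributing $(C_{123}+o(1))\epsilon^2$, both by hypothesis (\ref{con}). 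Substituting the SFH Steiner expansions of each $S_i$ and each adjacent $S_i \cup S_j$ and collecting powers of $\epsilon$, the coefficient of $\epsilon^2$ equals $\pi\bigl[\sum_{\mathrm{adj}} L_0(S_i\cup S_j) - \sum_i L_0(S_i)\bigr] + (C_{123}-C_{13})$, giving the announced formula for $L_0(S)$; the coefficient of $\epsilon$ directly gives the announced $L_1(S)$; and the $\epsilon^0$ term automatically identifies $\sigma_2(S)$ since $\sigma_2(S^{+\epsilon}) \to \sigma_2(S)$ as $\epsilon \to 0$.

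The tail computation is a mirror image. The pairs $\PP(M_{S_i}\geq u, M_{S_j}\geq u)$ for adjacent $(i,j)$ are rewritten through the four pairwise unions. Every event involving the pair $(S_2,S_4)$ or the triple $(S_1,S_3,S_4)$ has super-exponentially small probability by Lemma \ref{lebt}, and is therefore absorbed into $o(u^{-1}\varphi(u))$. The two surviving leftover terms, $\PP(M_{S_1}\geq u, M_{S_3}\geq u)$ and $\PP(M_{S_1}\geq u, M_{S_2}\geq u, M_{S_3}\geq u)$, will be evaluated by Lemma \ref{le1} with $n=2$, $d=0$ and constants $C_{13}$, $C_{123}$ provided by (\ref{con}); using $u^{-1}\varphi(u) = \overline{\Phi}(u) + O(u^{-3}\varphi(u))$ they contribute $(C_{13}/\pi)\overline{\Phi}(u)$ and $(C_{123}/\pi)\overline{\Phi}(u)$ modulo $o(u^{-1}\varphi(u))$. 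Collecting the coefficients of $\overline{\Phi}(u)$, $\varphi(u)/(2\sqrt{2\pi})$ and $u\varphi(u)/(2\pi)$ produces exactly the same combinations of $L_0$, $L_1$, $\sigma_2$ of the $S_i$ and of the adjacent unions as appeared in the Steiner part, which is why the two items of the SFH match term by term.

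The main difficulty will be the bookkeeping of the negligible contributions. On the probability side one must verify that every multiple-intersection event not covered by hypothesis (\ref{con}) is either super-exponentially small via Lemma \ref{lebt} (this is exactly where the two disjointness conditions in item~3 are used) or is absorbed into $o(u^{-1}\varphi(u))$ by Lemma \ref{le1}. On the geometric side one must justify that $S_1^{+\epsilon} \cap S_3^{+\epsilon} \cap S_4^{+\epsilon}$ is truly empty (or at least of $o(\epsilon^2)$ area) for small $\epsilon$, which follows from a standard compactness argument applied to the closed sets $S_1, S_3, S_4$. Once these accounting steps are in place the two inclusion--exclusion expansions produce identical coefficients in a completely mechanical way, and the announced formulas for $L_1(S)$ and $L_0(S)$ drop out.
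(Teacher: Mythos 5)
Your proposal is correct and follows essentially the same route as the paper: inclusion--exclusion applied in parallel to the tubes and to the events $\{M_{S_i}\geq u\}$, with the adjacent pairwise intersections traded for the pairwise unions, the terms involving $(S_2,S_4)$ and $(S_1,S_3,S_4)$ killed by compactness (for the tubes) and by Lemma \ref{lebt} (for the probabilities), and the surviving $(1,3)$ and $(1,2,3)$ terms handled by hypothesis (\ref{con}) and Lemma \ref{le1} with $n=2$, $d=0$. The bookkeeping of signs and the resulting formulas for $L_0(S)$ and $L_1(S)$ match the paper's.
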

 Note that in many cases, Lemma \ref{le2} will be used with $S_4= \emptyset$    and will  consequently take a simpler form. 
\begin{proof} 
In order to prove that $S$ satisfies the SFH, we need to show the correspondence between the tube formula of $S$ as in  (\ref{ste}) and the asymptotic expansion for the tail of the distribution as in (\ref{ct}).

$\bullet$ First, we consider the tube formula of $S$.  We prove the following equality  for the volume of $S^{+\epsilon}$ for a sufficiently small $\epsilon$:
\begin{align} 
A:= & \sigma_2(S^{+\epsilon}) = B:= \sigma_2((S_1\cup S_2)^{+\epsilon})+\sigma_2((S_2\cup S_3)^{+\epsilon})+\sigma_2((S_3\cup S_4)^{+\epsilon})+\sigma_2((S_4\cup S_1)^{+\epsilon}) \notag\\
 & -\sigma_2(S_1^{+\epsilon})-\sigma_2(S_2^{+\epsilon})-\sigma_2(S_3^{+\epsilon})-\sigma_2(S_4^{+\epsilon}) + \sigma_2(S_1^{+\epsilon}\cap S_2^{+\epsilon} \cap S_3^{+\epsilon}) - \sigma_2(S_1^{+\epsilon}\cap S_3^{+\epsilon}).\label{e:inclus}
 \end{align}
  Concerning $A$, we can observe  that $ A =  \sigma_2(S_1^{+\epsilon}   \cup S_2  ^{+\epsilon} \cup S_3 ^{+\epsilon} \cup S_4^{+\epsilon})$ and use the inclusion-exclusion  principle  to obtain  a full expansion.  Doing the same on $B$ we see that the following quantity is missing: 
  $$
  -\{2,4\} +  \{1,2,4\}  +\{2,3,4\} + \{1,3,4\} -\{1,2,3,4\},
$$
where, for example,  $\{2,4\} = \sigma_2(S_2^{+\epsilon}\cap S_4^{+\epsilon})$. Our hypotheses shows that these quantities vanish as soon as $\epsilon$ is small enough. This proves (\ref{e:inclus}) and implies that:

 $$\sigma_2(S^{+\epsilon})=\sigma_2(S)+\epsilon L_1(S)+\pi\epsilon^2 L_0(S)+o(\epsilon^2),$$
 where
 \begin{displaymath}
 \begin{array}{rl}
- \indent L_1(S)=&\displaystyle L_1(S_1\cup S_2)+L_1(S_2\cup S_3)+L_1(S_3\cup S_4)+L_1(S_4\cup S_1)- \sum_{i=1}^4L_1(S_i),\\
- \indent L_0(S)=&\displaystyle L_0(S_1\cup S_2)+L_0(S_2\cup S_3)+L_0(S_3\cup S_4)+L_0(S_4\cup S_1)- \sum_{i=1}^4L_0(S_i)+\frac{C_{123}-C_{13}}{\pi}.\\
 \end{array}
 \end{displaymath}

$\bullet$ For the excursion probability on $S$, using the inclusion-exclusion principle once again,
 \begin{displaymath}
 \begin{array}{rl}
 \PP(M_S\geq u)=& \PP(M_{S_1\cup S_2 \cup S_3 \cup S_4}\geq u)\\
 =& \displaystyle \sum_{i=1}^4 \PP(M_{S_i}\geq u)-\sum_{1\leq i<j \leq 4} \PP(M_{S_i}\geq u,\, M_{S_j}\geq u)\\
 & \displaystyle +\sum_{1\leq i<j<k \leq 4} \PP(M_{S_i}\geq u,\, M_{S_j}\geq u,\, M_{S_k}\geq u)-\PP(M_{S_i}\geq u,\, \forall i=1,2,3,4).
 \end{array}
 \end{displaymath}
 
 On the basis of  Lemma \ref{lebt}, it is easy to see that the events $\{M_{S_2}\geq u,\, M_{S_4}\geq u \}$ and $\{M_{S_1}\geq u,\,M_{S_3}\geq u,\, M_{S_4}\geq u \}$ have negligible probabilities ($o(u^{-1}\varphi(u))$), yielding:
 \begin{align*}
 \PP(M_S\geq u)=& \sum_{i=1}^4 \PP(M_{S_i}\geq u)-\PP(M_{S_1}\geq u,\, M_{S_2}\geq u)-\PP(M_{S_2}\geq u,\, M_{S_3}\geq u)\\
 & \displaystyle -\PP(M_{S_3}\geq u,\, M_{S_4}\geq u)-\PP(M_{S_4}\geq u,\, M_{S_1}\geq u)-\PP(M_{S_1}\geq u,\, M_{S_3}\geq u) \\
 & \displaystyle +\PP(M_{S_1}\geq u,\, M_{S_2}\geq u,\, M_{S_3}\geq u)+o\left(u^{-1}\varphi(u)\right)\\
 = & \displaystyle \PP(M_{S_1}\geq u,\, M_{S_2}\geq u)+\PP(M_{S_2}\geq u,\, M_{S_3}\geq u)+\PP(M_{S_3}\geq u,\, M_{S_4}\geq u)\\
 & +\PP(M_{S_4}\geq u,\, M_{S_1}\geq u) -\displaystyle \sum_{i=1}^4 \PP(M_{S_i}\geq u)-\PP(M_{S_1}\geq u,\, M_{S_3}\geq u) \\
 & \displaystyle +\PP(M_{S_1}\geq u,\, M_{S_2}\geq u,\, M_{S_3}\geq u)+o\left(u^{-1}\varphi(u)\right).
 \end{align*}
 Now, using the SFH property in the first and second conditions and applying Lemma \ref{le1} for two probabilities $\PP(M_{S_1}\geq u,\; M_{S_3}\geq u)$ and $\PP(M_{S_1}\geq u,\; M_{S_2}\geq u,\; M_{S_3}\geq u)$, we can deduce that:
 $$\PP(M_S\geq u)=L_0(S)\overline{\Phi}(u)+L_1(S)\frac{\varphi(u)}{2\sqrt{2\pi}}+\sigma_2(S)\frac{u\varphi(u)}{2\pi}+o\left(u^{-1}\varphi(u)\right),$$
 where the constants $L_0(S)$ and $L_1(S)$ are defined as in the statement.
 
 Since a correspondence  exists between the  two formulas obtained, we have proved the SFH property of $S$.
\end{proof}
%

\textbf{An introductory example to understand the method}

To introduce our method, we consider the case of the simplest non-convex polygon shown in Figure \ref{figu3}. Note that in this case, we have exactly one concave binary point with concave angle $\beta$.

 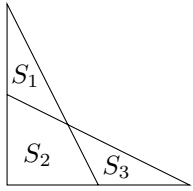
\begin{figure}[h]
\centering
\begin{tikzpicture}[scale=0.8]
\draw (1.5,0)--(0,3) --(0,0)--(3,0)--(0,1.5);
\draw (0.3,1.8) node{$S_1$};
\draw (0.5,0.5) node{$S_2$};
\draw (1.8,0.3) node{$S_3$};
\end{tikzpicture}
\caption{Non-convex polygon with concave binary irregular point.} \label{figu3}
\end{figure}

$S$ is decomposed into three  polygons $S_1,S_2$ and $S_3$ with  a zero measure intersection,  as indicated in Figure \ref{figu3}. These polygons are convex so they satisfy the SFH as well as $S_1\cup S_2$ and $S_2\cup S_3$.

To apply Lemma \ref{le2},  with $S_4 = \emptyset $, it remains to compute the areas of $(S_1^{+\epsilon}\cap S_3^{+\epsilon})$ and $(S_1^{+\epsilon}\cap S_2^{+\epsilon} \cap S_3^{+\epsilon})$. Elementary geometry shows that $(S_1^{+\epsilon}\cap S_3^{+\epsilon})$ consists of two sections of  a disc with angle $(\pi-\beta)$ and two quadrilaterals of area $\epsilon^2\tan(\beta/2)$ each, whereas in $(S_1^{+\epsilon}\cap S_2^{+\epsilon} \cap S_3^{+\epsilon})$, one quadrilateral is replaced by a section of a disc of angle $\beta$ (see Figure \ref{figre}). 

 \begin{figure}[h]
\centering
\begin{tikzpicture}
\fill[fill=black!20!white] (2,0)--(0,0)--(-1.5,1.5)--(-1.5,-0.5)--(2,-0.5)--(2,0);
\draw[thick] (2,0)--(0,0)--(-1.5,1.5);
\draw[dashed] (0.5,0) arc (0:180:0.5);
\draw[dashed] (0.5,0) arc (0:-180:0.5);
\draw[dashed] (-1,0.5)--(2,0.5);
\draw[dashed] (-1,-0.5)--(2,-0.5);
\draw[dashed] (0,-0.5)--(0,0.5);
\draw (-1.5,0)--(0,0);
\draw[dashed, rotate=45] (-0.5,0)--(0.5,0);
\draw[dashed, rotate=45] (0.5,-1)--(0.5,2);
\draw[dashed, rotate=45] (-0.5,-1)--(-0.5,2);
\draw (0,0)--(1,-1);
\draw(-1.4,0.3) node{$S_1$};
\draw (-1,-0.3)node{$S_2$};
\draw(1.5,-0.3) node{$S_3$};
\draw (-0.6,0.2) node{$\beta$};
\end{tikzpicture}
\caption{Intersection of $\epsilon$-neighborhood sets.}\label{figre} 
\end{figure}
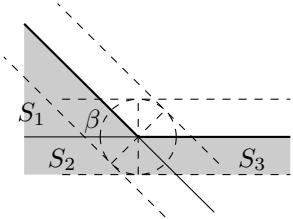

Thus,
$$\sigma_2(S_1^{+\epsilon}\cap S_3^{+\epsilon})=\left[(\pi-\beta)+2\tan \frac{\beta}{2}\right]\epsilon^2,$$
$$\sigma_2(S_1^{+\epsilon}\cap S_2^{+\epsilon} \cap S_3^{+\epsilon})=\left[(\pi-\beta)+\frac{\beta}{2}+\tan \frac{\beta}{2}\right]\epsilon^2.$$

Then using  (\ref{con}), we can define the constants $C_{123}$ and $C_{13}$, and compute that:
$$C_{123}-C_{13}= \frac{\beta}{2}-\tan\frac{\beta}{2}.$$
This quantity measures the non convexity of the concave binary point. Since  the $L_0-$ constants of $S_2,\, S_1\cup S_2$ and $S_2\cup S_3$ are both equal to $1$ in this case, an application of Lemma \ref{le2} shows that the coefficient of $ \overline{\Phi}(u)$ in the expansion of the tail of $M_S$ is now $\displaystyle 1- \frac{\tan(\beta/2)-\beta/2}{\pi}$.  

\subsection*{Proof of the main theorem}
Using the above lemmas, we are able now to prove the main theorem. If the parameter set $S$ consists of several disjoint connected components, then by using  Lemma \ref{lebt} in the appendix,  the tail distribution of the maxima defined on these components can be added with an error of $o\left(u^{-1}\varphi(u)\right)$, and the right-hand side of (\ref{f:th}) is also additive, we can assume in the sequel  that $S$ is connected.

 Our proof is based on induction on the number of concave points of $S$. It should be recalled here that there are three types of concave points: binary, angle and ternary (see Definition \ref{d:angles}).

$\bullet$ Suppose that $S$ has no concave point. Two cases will be considered: depending  on whether $\sr$ is empty or not.

If $\sr$ is empty, then $S$ consists of only one isolated edge. Using the parametrization of the unique edge, we see that $M_S$ is just the maximum of a smooth random process (with parameter of dimension 1). By using the Rice method for the number of up-crossings, Piterbarg \cite{10} or Rychlik \cite{12} showed that $S$ satisfies the SFH. 

If $\sr$ is not empty, then $S$ cannot have isolated edges and  $S$ has a positive reach in the sense of Federer \cite{federer}   because the curvature on the compact edges is bounded. Therefore,
\begin{equation}\label{f:steiner}
\sigma_2(S^{+\epsilon})=\chi(S)\pi \epsilon^2+\textnormal{OMC}(S)\epsilon+\sigma_2(S),
\end{equation}
for  small enough $\epsilon$. On the other hand, on the basis of Theorem 8.12 of Aza\"\i s and Wschebor \cite{6}, it can deduced that the SFH applies (see Appendix for details).

$\bullet$ Suppose  $S$ has at least one concave point. We will decompose  $S$ into the subsets whose  number of concave points is strictly smaller than that of $S$. The induction hypothesis then ensures that these subsets will  satisfy the SFH. We can  therefore use Lemma \ref{le2}  to "glue" them together and  to show that the union $S$ also satisfies the SFH. In fact, our method is based on the  "destruction" of  concave points as in the introductory example. More precisely,  there are four possibilities regarding $P$:
\begin{itemize}
\item[1.] Concave binary point on the exterior boundary of $S$. We decompose $S$ into three  compact subsets $S_1,\, S_2$ and $S_3$ as in Figure \ref{figu3}. 
By decomposition we mean  an essential partition, i.e. that $S= S_1 \cup S_2 \cup S_3$  and that $S_1$, $S_2$ and $S_3$ have disjoint interiors.  The decomposition is as follows: at $P$ we prolong  the two tangents  inward and construct two $\mathcal{C}^2$ paths that avoid all the holes and end at one regular point on the exterior boundary such that these two paths have no intersection other than point $P$. This is always possible because the connected open set $\overset{\circ}{S}$ is path connected. We then define $S_1$, $S_2$ and $S_3$ as in Figure \ref{figrr}. To apply Lemma \ref{le2} with $S_4=\emptyset $, we need to verify all the required conditions. 

To compute $ \sigma_2(S_1^{+\epsilon}\cap S_3^{+\epsilon})$ and 
$\sigma_2(S_1^{+\epsilon}\cap S_2^{+\epsilon} \cap S_3^{+\epsilon})$, we can locally  replace the  edges starting from $P$ by their tangents  with an error of $O( \epsilon^3)$ and thus $o( \epsilon^2)$.
In that case the computation of these areas is exactly the same as in the introductory example.
\begin{figure}[h]
\centering
\begin{tikzpicture}[scale=1.5]
\draw (0,0)--(2,0) arc(0:90:1) arc(0:90:1)--(0,0);
\draw[dashed, rounded corners=4pt] (1,1)--(0.75,1) arc (0:180:0.25)--(0,1);
\draw[dashed] (1,0)--(1,1);
\draw (0.5,1) circle (0.15cm);
\draw (0.5,1.5) node{$S_1$};
\draw (1.5,0.5) node{$S_3$};
\draw (0.5, 0.5) node{$S_2$};
\draw (1.2,1.2) node{$P$};
\end{tikzpicture}
\caption{Decomposition of $S$ at a binary concave point on the exterior boundary.} \label{figrr}
\end{figure}
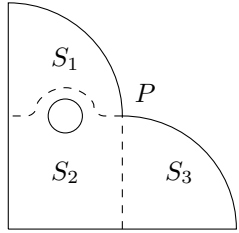
On the other hand, let us consider the number of concave points of $S_1,\, S_2,\, S_3,\, S_1 \cup S_2$ and $S_2\cup S_3$. Due to the way we have constructed these sets, we have destroyed the concavity of $P$ in the sense that with respect to these subsets, $P$ becomes a convex binary point or a regular one.  We can see that an irregular point of these subsets is also an irregular set of $S$ unless it is $P$ or one of the other endpoints on the exterior boundary of the two prolonged paths. However, we have proved that $P$ is no longer a concave point with respect to these subsets. Moreover, since the other endpoint is chosen to be a regular point on the exterior boundary of $S$ and  since the support cones at this point with respect to these subsets (as defined in (\ref{loco})) are included in the one with respect to $S$, these support cones are convex; then this endpoint is therefore  not a concave point. Hence, a concave point of the constructed subsets  is also a concave point of $S$ and $P$ is a concave point only with respect to $S$. These subsets  therefore have a number of concave points equal at most  to the one of $S$ minus 1. They  therefore satisfy the SFH by induction. 

Since all the required conditions are met, on the basis of  Lemma \ref{le2}, $S$ satisfies the SFH with the desired constants.
\item[2.] Concave binary point on the boundary of a hole inside $S$. Drawing two prolonged paths as above, we simply decompose $S$ into two subsets $S_2$ and $S'$. We then  divide $S'$ into three subsets as follows: we also choose two regular points on the boundary of the hole and two corresponding regular points on the exterior boundary of $S$ and construct two smooth curves so that they connect one regular point on the boundary of the hole with the corresponding one on the exterior boundary, and do not intersect themselves or two curves from the irregular point or additional holes. Again, this is possible because $\overset{\circ}{S}$ is path-connected.  $S_1, S_2, S_3, S_4$ are then constructed as in Figure \ref{figu10}. 

 \begin{figure}[h]
\centering
\begin{tikzpicture}
\draw (0,0)--(3,0)--(3,3)--(0,3)--(0,0);
\draw (2,1)--(1,1)--(1,2)--(2,2)--(2,1);
\draw[dashed] (1,0)--(1,1)--(0,1);
\draw[dashed] (1.5,2)--(1.5,3);
\draw[dashed] (2,1.5)--(3,1.5);
\draw (0.5,2.5) node{$S_1$};
\draw (2.5,0.5) node{$S_3$};
\draw (0.5, 0.5) node{$S_2$};
\draw (2.5,2.5) node{$S_4$};
\draw (1.2,1.2) node{$P$};
\end{tikzpicture}
\caption{Decomposition at a concave point on the interior boundary.} \label{figu10}
\end{figure}
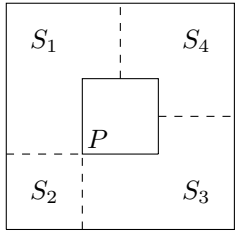

 We will use Lemma \ref{le2} to prove that $S$ satisfies the SFH. Indeed, the computation of the areas $ \sigma_2(S_1^{+\epsilon}\cap S_3^{+\epsilon})$ and 
$\sigma_2(S_1^{+\epsilon}\cap S_2^{+\epsilon} \cap S_3^{+\epsilon})$ and the arguments to show that the subsets in the first and second conditions of  Lemma \ref{le2} satisfy the SFH remain the same as in the case above.  The third condition about the empty intersections is easily verified from the construction. We can  therefore deduce the SFH property of $S$.

\item[3.] Concave ternary point. We define  $S_1$ as the isolated edge containing $P$, $S_2 =\{P\}$ and $S_3$ as the closure of the complement of $S_1$  (see Figure \ref{figg}). 

\begin{figure}[h]
\centering
\begin{tikzpicture}
\draw (0,0) rectangle (2,2) ;
\draw (2,1) -- (3,1);
\draw (1.2,1.2) node{$S_3$};
\draw (2.2,1.2) node{$S_2$};
\draw (3.1,1.2) node{$S_1$};
\end{tikzpicture}
\caption{Decomposition at a concave ternary point.} \label{figg}
\end{figure}
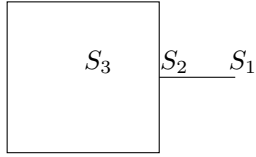

By the same arguments as in the above cases, we can check that all the required conditions in Lemma \ref{le2} are met.  $S$  then satisfies the SFH.
\item[4.] Angle point. We do the same as in the concave ternary point case (see Figure \ref{figgg}).

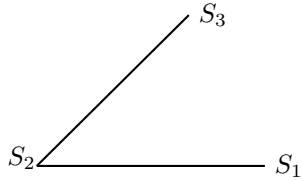
\begin{figure}[h]
\centering
\begin{tikzpicture}
\draw[thick] (0,0) -- (3,0) node[right]{$S_1$};
\draw[thick] (0,0) -- (2,2) node[right]{$S_3$};
\draw (-0.2, 0.1) node{$S_2$};
\end{tikzpicture}
\caption{At an angle point.} \label{figgg}
\end{figure}
\end{itemize}
We have  proved that $S$ satisfies the SFH. In order to establish (\ref{f:th}), we need to compute the constant $L_0(S)$.

Firstly, we have seen that when  $S$ contains no concave points:
$$
L_0(S) =\chi(S)$$
Secondly, Lemma \ref{le2}  shows that when we "glue" $S_1,S_2,S_3$ and $S_4$ together,  each concave points causes a distortion to the  additivity which is equal to:
$$ -\frac{\tan(\beta_i/2)-\beta_i/2}{\pi}.$$
Therefore we  eventually   have:
$$
L_0(S) =\chi(S)  - \sum _{i=1} ^k  \frac{\tan(\beta_i/2)-\beta_i/2}{\pi}$$
 and we are done.

 \section{Examples}\label{sect3}
In this section, we give some examples that are direct applications or direct generalizations of Theorem \ref{theo}. All these results are new and rather unexpected. In most two-dimensional cases, the parameter set  $S$ has the piecewise-$\mathcal{C}^2$ boundary as in Definition \ref{d:2},  satisfying the SFH. Therefore, in order to derive the asymptotic formula for  the tail of the maximum, we just use elementary geometry to compute the area of the tube and consider the corresponding coefficients.
 \subsection{The angle}
 Let $S$ be the angle as in Figure \ref{figu1}. On the basis of  Theorem \ref{theo}, $S$ satisfies the SFH. Using  elementary geometry, we can compute that for  small enough $\epsilon$,
 $$
 \sigma_2(S^{+\epsilon})=2\left(\sigma_1(S_1)+\sigma_1(S_2)\right)\epsilon+\left(\pi+\beta/2-\tan(\beta/2)\right)\epsilon^2,
 $$
 where $\sigma_1(.)$ is simply  the length of the segment. We then have:
 $$\PP(M_{S}\geq u)=\left(1-\frac{\tan(\beta/2)-\beta/2}{\pi}\right)\overline{\Phi}(u)+\frac{\sigma_1(S_1)+\sigma_1(S_2)}{\sqrt{2\pi}}\varphi(u)+o\left(u^{-1}\varphi(u)\right).
 $$
 \subsection{The multi-angle}
This is an extension of the angle case. Let $S$ be a self-avoiding continuous curve that is  the union of $k+1$ curves  with concave angles $\{\beta_1,\ldots ,\beta_k\}$. In this case, the induction process in the proof of the main theorem can be seen as the induction on the number of segments, i.e.,  we add one more segment into the union each time. Here again using  elementary geometry, for small enough $\epsilon$:
 $$\sigma_2(S^{+\epsilon})=2\sigma_1(S)\epsilon+\left(\pi+\underset{i=1}{\overset{k}{\sum}}\left(\beta_i/2-\tan(\beta_i/2)\right)\right)\epsilon^2,$$
where $\sigma_1(S)$ is the length of the curve that is equal to the sum of the lengths of the segments. Hence  we immediately  have the asymptotic formula:
 $$\PP(M_{S}\geq u)=\left(1- \frac{\underset{i=1}{\overset{k}{\sum}}\left(\tan(\beta_i/2)-\beta_i/2\right)}{\pi}\right) \overline{\Phi}(u)+\frac{\sigma_1(S)}{\sqrt{2\pi}}\varphi(u)+o\left(u^{-1}\varphi(u)\right).$$
\subsection{The empty square}
Let $S$ be the empty square, i.e. the boundary of a square in $\R^2$. This case is very similar to the multi-angle case, but the curves are no longer  self-avoiding. In this case, the induction process on the number of segments still works. We can  therefore deduce that  $S$ satisfies the SFH. The elementary geometry shows that for small enough $\epsilon$:
 $$
 \sigma_2(S^{+\epsilon})=2\sigma_1(S)\epsilon+(\pi-4)\epsilon^2;
 $$
 then, as a consequence,
$$\PP(M_S\geq u)= \frac{\pi-4}{\pi}\overline{\Phi}(u)+\frac{\sigma_1(S)}{\sqrt{2\pi}}\varphi(u)+o\left(u^{-1}\varphi(u)\right).$$
\subsection{The full square with whiskers}
We consider "the square with whiskers" as in Figure \ref{figu2}. In this case, $S$ has two concave ternary points. From the main theorem, we know that $S$ satisfies the SFH. Therefore, since we can compute the area of the tube as
$$\sigma_2(S^{+\epsilon})=\sigma_2(S)+\textnormal{OMC}(S)\epsilon+ (2\pi-4)\epsilon^2,$$
for small enough $\epsilon$ ; we have the expansion:
$$\PP(M_S\geq u)=\frac{2\pi-4}{\pi}\overline{\Phi}(u)+\frac{\textnormal{OMC}(S)}{2\sqrt{2\pi}}\varphi(u)+\frac{\sigma_2(S)}{2\pi}u\varphi(u)+o\left(u^{-1}\varphi(u)\right).$$

\subsection{An irregular locally convex set}\label{sub35}
In this subsection, we consider a strange and interesting example. We  consider $S$ as the union of two tangent curves as in Figure \ref{figu7}.
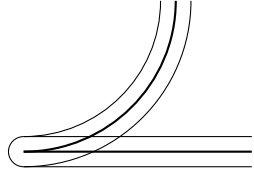
\begin{figure}[h]
\centering
\begin{tikzpicture}
\draw[thick] (0,0) arc (-90:0:2);
\draw[thick] (0,0)--(3,0);
\draw (0,0.2) arc (-90:0:1.8);
\draw (0,-0.2) arc (-90:0:2.2);
\draw (0,0.2)--(3,0.2);
\draw (0,-0.2)--(3,-0.2);
\draw (0,0.2) arc (90:270:0.2);
\end{tikzpicture}
\caption{Two tangent edges.} \label{figu7}
\end{figure}
 
Suppose that $S_1$ is a section of a circle of radius $R$ and $S_2$ is a segment tangent to that circle. For small enough $\epsilon$ , the area of the intersection between two tubes is:

$$\frac{\pi}{2}\epsilon^2+\frac{(R+\epsilon)^2}{2}\arcsin \frac{2\sqrt{R\epsilon}}{R+\epsilon}-(R-\epsilon)\sqrt{R\epsilon}=\frac{\pi}{2}\epsilon^2+\frac{8}{3}\sqrt{R}\epsilon^{3/2}+O(\epsilon^{5/2}).$$
In the above equation, we used the fact that for small enough $x$ ,
$$\arcsin x=x+\frac{1}{2}\frac{x^3}{3}+\frac{1\cdot 3}{2\cdot4}\frac{x^5}{5}+\ldots \; .$$
It is clear that the order of the area of the intersection is not of $2$ as in Condition (\ref{con}), so we cannot apply  Lemma \ref{le2} directly. In this example, the area of the intersection contains two order: $2$ and $3/2$. 

The asymptotic formulas for the tail of the maximum of the random fields defined on $S_1$ and $S_2$ are well understood since they are one-dimensional cases. Then by the inclusion-exclusion principle, 
$$\PP(M_S\geq u)=\PP(M_{S_1}\geq u)+\PP(M_{S_2}\geq u)-\PP\left(M_{S_1}\geq u,\, M_{S_2}\geq u\right).$$
To compute $\PP(M_S\geq u)$, we need to derive an expansion  for $\PP\left(M_{S_1}\geq u,\, M_{S_2}\geq u\right)$.

By carefully  examining  in the proof of Lemma \ref{le1}, we can choose $\alpha$ such that the difference between the upper and the lower bounds of the probability $\PP(M_{S_1}\geq u, \, M_{S_2}\geq u)$ is negligible. Indeed, as in Lemma \ref{le1}, after substituting the area of the intersection of the tubes into the expectation, we obtain the upper bound:
\begin{displaymath}
\begin{array}{rl}
&\displaystyle \frac{1}{2\pi} \int_u^{u+1} \left(x^2+O(1)\right)\varphi(x) \left[\frac{\pi}{2}\frac{2(x-u)}{u-u^{\alpha}}+\frac{8}{3}\sqrt{R}\left(2\frac{x-u}{u-u^{\alpha}}\right)^{3/4}+O\left(\left(2\frac{x-u}{u-u^{\alpha}}\right)^{5/4}\right)\right] dx +o(u^{-1}\varphi(u))\\
=&\displaystyle \frac{1}{2\pi} \int_u^{u+1} x^2\varphi(x) \left[\frac{\pi}{2}\frac{2(x-u)}{u-u^{\alpha}}+\frac{8}{3}\sqrt{R}\left(2\frac{x-u}{u-u^{\alpha}}\right)^{3/4}\right] dx +o(u^{-1}\varphi(u)),
\end{array}
\end{displaymath}

and, similarly, the lower one:
$$\frac{1}{2\pi} \int_u^{u+1} x^2\varphi(x) \left[\frac{\pi}{2}\frac{2(x-u)}{x+u^{\alpha}}+\frac{8}{3}\sqrt{R}\left(2\frac{x-u}{x+u^{\alpha}}\right)^{3/4}\right] dx +o(u^{-1}\varphi(u)).$$

To control the difference between them, we firstly consider the term:
\begin{displaymath}
\begin{array}{rl}
D_1=&\displaystyle \int_u^{u+1} x^2\varphi(x) \left[\left(\frac{x-u}{u-u^{\alpha}}\right)^{3/4}-\left(\frac{x-u}{x+u^{\alpha}}\right)^{3/4}\right] dx\\
=&\displaystyle \int_u^{u+1} x^2\varphi(x) (x-u)^{3/4} \frac{(x+u^{\alpha})^{3/4}-(u-u^{\alpha})^{3/4}}{(x+u^{\alpha})^{3/4}(u-u^{alpha})^{3/4}} dx.
\end{array}
\end{displaymath}

Since 
$$a^{3/4}-b^{3/4}=\frac{a^3-b^3}{\left(a^{3/4}+b^{3/4}\right)\left(a^{3/2}+b^{3/2}\right)}=\frac{(a-b)(a^2+ab+b^2)}{\left(a^{3/4}+b^{3/4}\right)\left(a^{3/2}+b^{3/2}\right)}$$
for $a=x+u^{\alpha}$ and $b=u-u^{\alpha}$,
and we can replace $x,a,b$ by $u$, then:

\begin{displaymath}
\begin{array}{rl}
D_1\leq & \displaystyle (const) \int_u^{u+1} u^2\varphi(x) (x-u)^{3/4} \frac{\left(x-u+2u^{\alpha}\right)u^2}{u^{3+3/4}} dx\\
\leq & \displaystyle (const) u^{\alpha +1/4}\int_u^{u+1} \varphi(x) (x-u)^{3/4} dx.
\end{array}
\end{displaymath}
Here using  the change of variable $x=u+y/u$ once again,
$$D_1 \leq (const) \frac{u^{\alpha +1/4}}{u^{1+3/4}}\varphi(u)\int_0^{u}\exp\left(-y-\frac{y^2}{2u^2}\right)y^{3/4}dy .$$
Therefore, if we choose $\alpha <1/2$ then $D_1=o(u^{-1}\varphi(u))$. For the second term :
$$\int_u^{u+1} x^2\varphi(x) \left[\frac{x-u}{u-u^{\alpha}}-\frac{x-u}{x+u^{\alpha}}\right] dx,$$
we can use the same arguments. Note that this case is simpler.

In conclusion, we have proved that if $\alpha <1/2$, then the difference between the upper and the lower bounds of the probability $\PP(M_{S_1}\geq u, \, M_{S_2}\geq u)$ is negligible. As in Lemma \ref{le1}, we have the following expansion:
$$\PP(M_{S_1}\geq u, \, M_{S_2}\geq u) = \frac{8\sqrt{R}}{2^{1/4}3\pi}\Gamma(7/4)u^{-1/2}\varphi(u)+\frac{\overline{\Phi}(u)}{2} +o(u^{-1}\varphi(u)).$$
Thus, we have:
\begin{proposition} Using the above notation
\begin{equation}\label{tangent}
\PP(M_ {S_1\cup S_2}\geq u)=\frac{3\overline{\Phi}(u)}{2}-\frac{8\sqrt{R}}{2^{1/4}3\pi}\Gamma(7/4)u^{-1/2}\varphi(u)+\frac{\sigma_1(S_1)+\sigma_1(S_2)}{\sqrt{2\pi}}\varphi(u)+o\left(u^{-1}\varphi(u)\right).
\end{equation}
\end{proposition}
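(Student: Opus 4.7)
The plan is to compute $\PP(M_{S_1\cup S_2}\geq u)$ through the inclusion-exclusion identity
$$\PP(M_{S_1\cup S_2}\geq u) = \PP(M_{S_1}\geq u) + \PP(M_{S_2}\geq u) - \PP(M_{S_1}\geq u,\, M_{S_2}\geq u).$$
Each of $S_1$ and $S_2$ is a single smooth curve, treated as an isolated edge in the sense of Definition \ref{d:2}. Either by the one-dimensional Rice/up-crossings results of Piterbarg and Rychlik cited in the proof of Theorem \ref{theo}, or directly from the SFH applied to a single isolated edge (for which $L_0=1$ and $L_1=\textnormal{OMC}=2\sigma_1$), one obtains
$$\PP(M_{S_i}\geq u) = \overline{\Phi}(u) + \frac{\sigma_1(S_i)}{\sqrt{2\pi}}\varphi(u) + o(u^{-1}\varphi(u)), \qquad i=1,2.$$

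The substantive work lies in the joint probability. The elementary geometry carried out just before the proposition gives
$$\sigma_2(S_1^{+\epsilon}\cap S_2^{+\epsilon}) = \frac{\pi}{2}\epsilon^2 + \frac{8}{3}\sqrt{R}\,\epsilon^{3/2} + O(\epsilon^{5/2}),$$
which is a sum of two pure powers rather than a single one, so Lemma \ref{le1} does not apply directly. I would revisit the proof of that lemma, substitute this two-term expansion (with $\epsilon$ replaced by the random radii $\overline{r}$ and $\underline{r}$ of \eqref{overline}) into the Rice formula integrand, and treat the two orders separately. The $\epsilon^2$ piece with coefficient $\pi/2$ is formally the case $n=2$, $d=0$, $C=\pi/2$ of Lemma \ref{le1} and yields the contribution $\overline{\Phi}(u)/2$; the $\epsilon^{3/2}$ piece with coefficient $\tfrac{8}{3}\sqrt{R}$ is formally the case $n=2$, $d=1/2$ and yields $\frac{8\sqrt{R}}{3\cdot 2^{1/4}\pi}\Gamma(7/4)\,u^{-1/2}\varphi(u)$.

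The main obstacle is to show that the corresponding upper and lower bounds coincide modulo $o(u^{-1}\varphi(u))$. The dominant discrepancy comes from the $\epsilon^{3/2}$ term, where one must estimate
$$D_1 = \int_u^{u+1} x^2\varphi(x)\left[\left(\frac{x-u}{u-u^{\alpha}}\right)^{3/4}-\left(\frac{x-u}{x+u^{\alpha}}\right)^{3/4}\right]dx.$$
Using the identity $a^{3/4}-b^{3/4}=(a-b)(a^2+ab+b^2)/[(a^{3/4}+b^{3/4})(a^{3/2}+b^{3/2})]$ with $a=x+u^{\alpha}$, $b=u-u^{\alpha}$, replacing each of $x,a,b$ by $u$ up to lower-order error, and then changing variables $x=u+y/u$, I would show $D_1 \leq C\,u^{\alpha-3/2}\varphi(u)$, which is $o(u^{-1}\varphi(u))$ precisely when $\alpha<1/2$. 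The $\epsilon^2$ discrepancy is estimated by the same technique with an easier exponent. Fixing any $\alpha\in(0,1/2)$ in the application of Lemma \ref{leaz} therefore closes the sandwich and yields
$$\PP(M_{S_1}\geq u,\, M_{S_2}\geq u) = \frac{\overline{\Phi}(u)}{2} + \frac{8\sqrt{R}}{3\cdot 2^{1/4}\pi}\Gamma(7/4)\,u^{-1/2}\varphi(u) + o(u^{-1}\varphi(u)).$$
Assembling the three pieces via inclusion-exclusion produces exactly the stated formula \eqref{tangent}, the key novelty being the refinement $\alpha<1/2$ compared with the usual choice and the two separate Gamma contributions replacing the single one of Lemma \ref{le1}.
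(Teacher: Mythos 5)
Your proposal is correct and follows essentially the same route as the paper: inclusion--exclusion, the two-term tube-area expansion $\frac{\pi}{2}\epsilon^2+\frac{8}{3}\sqrt{R}\,\epsilon^{3/2}+O(\epsilon^{5/2})$ substituted into the Rice-formula integrand of Lemma \ref{le1}, the two orders handled as the formal cases $d=0$ and $d=1/2$, and the sandwich closed by the same estimate $D_1=O(u^{\alpha-3/2}\varphi(u))$ forcing $\alpha<1/2$. No substantive difference from the paper's argument.
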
 

 This example is an apparent counter-example to the results of Adler and Taylor. More precisely, $S$ is clearly a piecewise smooth locally convex manifold: it is easy to check that at the intersection of the circle and the straight line, the support cone is limited to one direction and is  thus convex. Thus  if the random field $X$ is sufficiently smooth, it seems that Theorem 14.3.3 of \cite{1} implies the validity of the Euler characteristic heuristic and Theorem 12.4.2 of \cite{1} gives an expansion of the Euler characteristic function  that should apply. This would be clearly in contradiction with the term $u^{-1/2}\varphi(u)$ in (\ref{tangent}).
 
 In fact, there is no contradiction: Theorem 14.3.3   also demands the manifold to be regular in the sense of Definition 9.22 of \cite{1} and the present set is not a cone space in the sense of Definition 8.3.1 of \cite{1}. This shows that the local convexity itself is not sufficient.
 
 It is surprising to see that in (\ref{tangent}), the asymptotic formula contains three  terms corresponding to the powers: $-1$ (in $\overline{\Phi}(u)$), $-1/2$ and $0$. This is the first time we can see such a combination; in all the well-known cases before, we only saw a combination of integer powers. We emphasize that this strange combination comes from the tube formula of the parameter set.
 
 \subsection{Some examples in dimension 3}\label{sub36}
 Lemmas \ref{le1} and  \ref{le2}  can be applied  in  higher dimensions. However, in dimension 3, for example, they do not make it possible to obtain a full Taylor expansion that would contain, in general, four terms. 
 
 In fact, the coefficient of $\overline{\Phi}(u)$ cannot be determined for non-locally convex sets. We give some examples below.
\begin{itemize}
\item $S$ is a dihedral that is the union of two non-coplanar rectangles $S_1$ and $S_2$, with a common edge such that the angle of the dihedral is $\alpha$, see Figure \ref{figu11}.

 \begin{figure}[h]
\centering
\begin{tikzpicture}
\draw (0,0)--(2,0)--(3,1)--(1,1)--(1,3)--(0,2)--(0,0);
\draw (0,0)--(1,1);
\draw (0.5,0.5)--(0.8,0.5) arc (0:90:0.3)--(0.5,0.5);
\draw (0.7,0.88) node{$\alpha$};
\draw (1.7,0.5) node{$S_1$};
\draw (0.5, 1.7) node{$S_2$};
\end{tikzpicture}
\caption{Example of  a dihedral.} \label{figu11}
\end{figure}
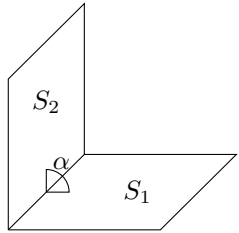

Using the inclusion-exclusion principle, we are  just concerned  with  the probability $\PP\left(M_{S_1}\geq u,\, M_{S_2}\geq u\right)$. Using Lemma \ref{le1} in the case where  $n=3$ and $d=1$, we obtain the expansion of this probability with only one term and an error of $o\left(\varphi(u)\right)$. Then,  
\begin{displaymath}
\begin{array}{rl}
 \PP(M_S\geq u)=& \quad \displaystyle \frac{\sigma_1(\partial S_1)+\sigma_1(\partial S_2)-\sigma_1( S_1\cap S_2)(( \pi+\alpha)/2+\cot(\alpha/2))/\pi}{2\sqrt{2\pi}}\varphi(u)\\
 &\displaystyle +\; \frac{\sigma_2(S_1)+\sigma_2(S_2)}{2\pi}u\varphi(u)+o\left(\varphi(u)\right).
 \end{array}
\end{displaymath}
\item $S$ has the $L-$shape, as in Figure \ref{figu12}.

 \begin{figure}[h]
\centering
\begin{tikzpicture}[scale=0.6]
\draw (0,0)--(2,0)--(2,1)--(1,1)--(1,3)--(0,3)--(0,0);
\draw (2,0)--(2.6,0.4)--(2.6,1.4)--(1.6,1.4)--(1.6,3.4)--(0.6,3.4)--(0,3);
\draw (1,1)--(1.6,1.4);
\draw (1,3)--(1.6,3.4);
\draw (2,1)--(2.6,1.4);
\draw[dashed] (0,0)--(0.6,0.4)--(0.6,3.4);
\draw[dashed] (0.6,0.4)--(2.6,0.4);
\draw[dotted] (1,0)--(1.6,0.4)--(1.6,1.4)-- (0.6,1.4)--(0,1)--(1,1)--(1,0);
\end{tikzpicture}
\caption{L-shape.} \label{figu12}
\end{figure}

Then, by decomposing $S$ into three hyper-rectangles $S_1,\; S_2$ and $S_3$ that are indicated by the dotted lines  with $S_3$  between the two others, we can apply Lemma \ref{le2} with a slight modification that since, in this case, $n=3$ and $d=1$, then the asymptotic formulas for $\PP\left(M_{S_1}\geq u,\, M_{S_2}\geq u\right)$ and $\PP\left(M_{S_1}\geq u,\, M_{S_2}\geq u,\, M_{S_3}\geq u\right)$ are of order $\varphi(u)$ and the error  is $o\left(\varphi(u)\right)$ (from Lemma \ref{le1}). We then  have an expansion with three terms as follows:

\begin{equation} \label{dim3}
 \PP(M_S\geq u)=\frac{\varphi(u)L_1(S)}{2\sqrt{2\pi}}+\frac{L_2(S)u\varphi(u)}{2\pi}+\frac{L_3(S)(u^2-1)\varphi(u)}{(2\pi)^{3/2}}+o\left(\varphi(u)\right),
\end{equation}
 where the coefficients $\{L_i(S),\; i=1,\ldots, 3\}$ are given by the Steiner formula and will be defined  at the end of this section. 

 \item In a more complicated case, i.e., non-convex trihedral (see Figure \ref{figu13}).

 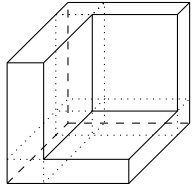
\begin{figure}[h]
\centering
\begin{tikzpicture}[scale=0.8]
\draw (0,0)--(2,0)--(3,1)--(3,3)--(1,3)--(0,2)--(0,0);
\draw (2,0)--(2,0.4)--(0.6,0.4)--(0.6,2)--(1.4,2.8)--(2.8,2.8)--(2.8,1.2)--(2,0.4);
\draw (2.8,1.2)--(1.4,1.2)--(1.4,2.8);
\draw (0.6,0.4)--(1.4,1.2);
\draw (0,2)--(0.6,2);
\draw (2.8,2.8)--(3,3);
\draw[dashed] (0,0)--(1,1)--(1,3);
\draw[dashed] (1,1)--(3,1);
\draw[dotted] (0.8,0.8)--(0.8,2.8)--(1.4,2.8)--(1.6,3)--(1.6,1);
\draw[dotted] (0,0.4)--(0.6,0.4)--(0.6,0);
\draw[dotted] (0.8,2.8)--(1.4,2.8)--(1.6,3);
\draw[dotted] (1.4,1.2)--(0.8,1.2)--(1,1.4)--(1.6,1.4)--(1.4,1.2)--(1.4,0.8);
\draw[dotted] (0.8,0.8)--(2.8,0.8)--(2.8,1.2)--(3,1.4)--(1.6,1.4);
\draw[dotted] (0.6,0)--(1.6,1);
\draw[dotted] (0,0.4)--(0.8,1.2);
\end{tikzpicture}
\caption{Example of  a non-convex trihedral.} \label{figu13}
\end{figure}

In this case, we have three concave edges in the sense that the angles inside the trihedral at these edges are strictly greater than $\pi$. We will destroy this concavity by extending the planes (faces) containing these edges so that they decompose $S$ into smaller convex subsets $\{S_i\}$ with disjoint interiors (see the dotted lines in the figure). Observe that the intersection between two subsets is of one of four types: empty set, a single point, an edge or a face. If it is a face, then the union of these two subsets is also convex. An intersection between three or more subsets is one of three types: empty set, a single point or an edge. Using the inclusion-exclusion principle, we need to find the expansion of the probability of the intersection of the events $M_{S_{i_k}}\geq u$ for some $k$. Concerning the intersection of the $\{S_{i_k}\}$, we have the following cases: 
\begin{itemize}
\item[1.] Empty set. On the basis  of Lemma \ref{lebt}, the probability of the intersection of the events $M_{S_{i_k}}\geq u$ is $o(u^{-1}\varphi(u))$.
\item[2.] A single point. By applying Lemma \ref{le1} in the case $d=0$, the probability considered  is also $o(u^{-1}\varphi(u))$.
\item[3.] An edge. By applying Lemma \ref{le1} in the case $n=3$ and $d=1$, the expansion for the probability  considered is of the order $\varphi(u)$ with the error $o(\varphi(u))$.
\item[4.] A face. This case just happens when we consider the intersection between two subsets. Since both  of these subsets and their union are convex, the expansion for the tail distribution of the maxima defined on them is well-known. We can  therefore compute the expansion for the probability  considered by the inclusion-exclusion principle.
\end{itemize}

We  therefore obtain  an asymptotic formula for $\PP(M_S\geq u)$, as  in (\ref{dim3}).
\end{itemize}
In general, by the same arguments and using  induction, when $S$ is a polytope, 
$$ \PP(M_S\geq u)=\frac{\varphi(u)L_1(S)}{2\sqrt{2\pi}}+\frac{L_2(S)u\varphi(u)}{2\pi}+\frac{L_3(S)(u^2-1)\varphi(u)}{(2\pi)^{3/2}}+o\left(\varphi(u)\right),$$
where
 \begin{itemize}
 \item[-] $L_3(S)$ is the volume of $S$.
 \item[-] $L_2(S)$ is one half of the surface area.
 \item[-] To compute  $L_1(S)$, we consider two types of edge: convex and concave.
 An  internal dihedral angle is associated  to each edge  $i$. If this angle is less than   or equal to $\pi$, the edge is considered to be  convex  and the angle is denoted by $\alpha_i$. Let $h$ be the number of such edges.    If the angle is larger than $\pi$  the edge is considered to be "concave" and the angle  is denoted by $\beta_i $. Let $k$   be the number of such angles, then:
 
 $$L_1(S)=\sum_{i=1}^h\frac{(\pi-\alpha_i)}{2\pi}l_{i}+\sum_{i=1}^k\frac{\cot (\beta_i/2)}{\pi}l_{i},$$
 where $l_i$ is the length of edge $i$. 
 \end{itemize}
\section*{Conclusion}

 The relation between the  expansion tail of the maximum and the Steiner formula was  first established by Sun \cite{13}  and Takemura and Kuriki  \cite{14} for the isonormal Gaussian process defined on the unit sphere. the basis of the proof was the well-known  relation between the standard Gaussian distribution on  $\R^n$   and the uniform distribution on the sphere. 
 In the  rather different cases considered here, the Steiner formula for the tube  still governs the expansion of the tail of the maximum as if the excursion set was precisely a unique ball with  a random radius. We have not found any counter-example to that principle and  we  therefore conjecture that the result is true for a much wider class of sets than those considered in this paper. 
 
 \section{Appendix}
 \subsection{Proof of Lemma  \ref{le1} }  \label{s:le1}
 For the proof, we need some auxiliary lemmas. Firstly, we recall a well-known result on Gaussian processes  \cite{6}. 

\begin{lemma}[Borel-Sudakov-Tsirelson inequality] Let $\mathcal{X}$ be a centered Gaussian field  almost surely bounded on a parameter set  $Z$ . Then $\E(M_Z)<\infty$, and, for all $u>0$,
$$\PP\left(M_Z-\E(M_Z) \geq u\right) \leq \exp(-u^2/(2\sigma^2_Z)),$$
where $\displaystyle \sigma^2_Z=\sup_{t\in Z} \E(X^2(t))$.
\end{lemma}

An  easy consequence of the  BST inequality is that, for each $\epsilon >0$, there exists a constant $C_{\epsilon}>0$ such that for all $u>0$:
\begin{equation}\label{fer} 
\PP(M_Z \geq u)\leq C_{\epsilon} \exp \left(\frac{-u^2}{2(\sigma^2_Z+\epsilon)}\right).
\end{equation}

We will use the above observation to prove the following lemma.
\begin{lemma}\label{lebt} Let $\mathcal{X}$ be a random field satisfying Assumption $A$. Let $Z_1,\ldots ,Z_k$ be some compact subsets of $B$ such that:  
$$Z_1\cap\ldots \cap Z_k=\emptyset.$$
Then there exist two constants $\theta >1$ and $C$ such that for all $u>0$,
$$\PP\left(M_{Z_1} \geq u,\, \ldots ,\, M_{Z_k}\geq u\right)\leq C.\exp(-\theta u^2/2).$$
\end{lemma}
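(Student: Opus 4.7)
The plan is to reduce the joint tail event to a supremum bound for a single auxiliary Gaussian field on a product space, and then apply the Borel--Sudakov--Tsirelson consequence (\ref{fer}) with a supremum variance that is strictly smaller than $k^2$. The point is that the topological hypothesis $Z_1\cap\cdots\cap Z_k=\emptyset$, combined with compactness, forces a quantitative strict inequality that upgrades the usual $u^2/2$ exponent to $\theta u^2/2$ with $\theta>1$.

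First, I would introduce the centered Gaussian field
$$Y(t_1,\ldots,t_k)=X(t_1)+\cdots+X(t_k)$$
indexed by the compact product $Z:=Z_1\times\cdots\times Z_k\subset\R^{nk}$. Since the paths of $X$ are almost surely continuous and each $Z_i$ is compact, on the event $\{M_{Z_1}\geq u,\ldots,M_{Z_k}\geq u\}$ one may choose $t_i\in Z_i$ with $X(t_i)\geq u$, so $Y(t_1,\ldots,t_k)\geq ku$. Hence
$$\PP\bigl(M_{Z_1}\geq u,\,\ldots,\,M_{Z_k}\geq u\bigr)\leq \PP\Bigl(\sup_{Z} Y\geq ku\Bigr).$$

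Second, I would estimate the pointwise variance of $Y$: by stationarity,
$$\sigma_Z^2 := \sup_{(t_1,\ldots,t_k)\in Z}\E\bigl(Y(t_1,\ldots,t_k)^2\bigr) = \sup_{Z}\Bigl(k + 2\sum_{i<j}r(t_i-t_j)\Bigr),$$
where $r$ is the covariance function of $X$. This is continuous in $(t_1,\ldots,t_k)$ and $Z$ is compact, so the supremum is attained at some $(t_1^\ast,\ldots,t_k^\ast)\in Z$. The essential claim is $\sigma_Z^2<k^2$: otherwise $r(t_i^\ast-t_j^\ast)=1$ for all $i\neq j$, and since Assumption $A$ (iv) forces $|r(h)|<1$ for $h\neq 0$, all of the $t_i^\ast$ would coincide at a common point belonging to every $Z_i$, contradicting $Z_1\cap\cdots\cap Z_k=\emptyset$.

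Third, I would apply (\ref{fer}) to the almost surely bounded, centered Gaussian field $Y$ on $Z$: for every $\epsilon>0$ there exists $C_\epsilon$ with
$$\PP\Bigl(\sup_Z Y\geq ku\Bigr)\leq C_\epsilon \exp\!\left(-\frac{k^2 u^2}{2(\sigma_Z^2+\epsilon)}\right),\qquad u>0.$$
Since $\sigma_Z^2<k^2$, choosing $\epsilon>0$ small enough guarantees $\theta:=k^2/(\sigma_Z^2+\epsilon)>1$, and the stated estimate follows with $C=C_\epsilon$.

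The only delicate step is the strict inequality $\sigma_Z^2<k^2$; everything else is a routine rephrasing of BST. Its proof rests on three ingredients: continuity of the variance in $(t_1,\ldots,t_k)$, compactness of the product $Z$, and the strict pointwise inequality $r(h)<1$ for $h\neq 0$, which is exactly where the non-degeneracy hypothesis (iv) of Assumption $A$ enters the argument.
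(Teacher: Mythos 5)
Your proposal is correct and follows essentially the same route as the paper: the same auxiliary field $Y(t_1,\ldots,t_k)=X(t_1)+\cdots+X(t_k)$ on the compact product $Z_1\times\cdots\times Z_k$, the same application of the BST consequence (\ref{fer}), and the same compactness argument giving $\sigma_Z^2<k^2$. Your contradiction argument merely spells out the step the paper states in one line, namely that $\E(X(t_i)X(t_j))<1$ for $t_i\neq t_j$ together with $Z_1\cap\cdots\cap Z_k=\emptyset$ rules out $\sigma_Z^2=k^2$.
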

\begin{proof}
 On the set   $Z:=Z_1\times \ldots \times Z_k$, we  consider  the Gaussian field $Y$ defined by: 
$$Y(t_1,\ldots,t_k)=X(t_1)+\ldots +X(t_k).$$
Then
$$\PP\left(M_{Z_1} \geq u,\, \ldots ,\, M_{Z_k}\geq u\right) \leq \PP \left(\underset{t\in Z}{\sup}Y(t) \geq k.u\right).$$
Applying (\ref{fer}) to the Gaussian field $Y$, we see that  for each $\epsilon >0$, there exists a constant $C_{\epsilon}>0$ such that for all $u>0$:
$$\PP(\underset{t\in Z}{\sup}Y(t) \geq k.u)\leq C_{\epsilon} \exp \left(\frac{-k^2u^2}{2(\sigma^2_Z+\epsilon)}\right),$$
where
$$\sigma^2_Z=\underset{t\in Z}{\sup} \E(Y^2(t))=\underset{t\in Z}{\sup} \E\left[(X(t_1)+\ldots+X(t_k))^2\right].$$
Since $\E(X^2(t_i))=1$, $\E\left(X(t_i)X(t_j)\right)<1$ if $t_i\neq t_j$ and $Z$ is compact, we have $\sigma^2_Z <k^2$. By choosing $\epsilon >0$ such that $k^2>\sigma^2_Z+\epsilon$, the result follows.
\end{proof}
Since  we look  at  a result of the type   (\ref{f:th}),  every event with probability   $o\left(u^{-1}\varphi(u)\right)$ can be neglected and will be called "negligible".  Lemma \ref{lebt}   shows that the  event $ (M_{Z_1} \geq u,\, \ldots ,\, M_{Z_k}\geq u)$ is negligible as $u \to +\infty$.

The following lemma is a recent result of Aza\"\i s and Wschebor \cite{7}.
\begin{lemma}\label{leaz}
Let $X$ be a random field satisfying Assumption $A$ and $\alpha$ be a given real number $0<\alpha <1$. Then the following events are negligible:
\begin{itemize}
\item[] $A_1=\left\{\exists \; \mbox{a local maximum in}\; B \mbox{ with value} \; \geq u+1\right\}$.
\item[] $A_2=\left\{\exists \; \mbox{two or more local maxima in}\; \overset{\circ}{B} \mbox{ with value} \; \geq u\right\}$.
\item[] 
\begin{displaymath}
\begin{array}{rl}
A_3 = & \left\{ \exists \,\mbox{a local maximum}\; t \in \overset{\circ}{B} \right.\\
& \left. \displaystyle \mbox{such that} \; u<X(t)<u+1,\; \min\left\{\gamma^T X''(s)\gamma :\, s\in B(t,u^{-\beta}),\, \gamma=\frac{s-t}{\|s-t\|}\right\}\leq -X(t)-u^{\alpha} \right\},
\end{array}
\end{displaymath} where $\alpha$ and $\beta$ are some  positive constants in $(0,1)$, satisfying  $\beta > (1-\alpha)/2$.
\item[] 
\begin{displaymath}
\begin{array}{rl}
A_4 =& \bigg\{ \exists \,\mbox{a local maximum}\; t \in B \\
& \left. \displaystyle \mbox{such that} \; u<X(t)<u+1,\; \max\left\{\gamma^T X''(s)\gamma :\, s\in B(t,u^{-\beta}),\, \gamma=\frac{s-t}{\|s-t\|}\right\}\geq -X(t)+u^{\alpha} \right\}.
\end{array}
\end{displaymath}
\end{itemize} 
\end{lemma}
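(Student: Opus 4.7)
The plan is to bound each of $\PP(A_i)$ by the expected number of critical points (or pairs of critical points) satisfying the side conditions defining $A_i$, using the multidimensional Rice--Kac formula, and then applying Markov's inequality. In all four cases the resulting bound will turn out to be super-exponentially smaller than $u^{-1}\varphi(u)$, which is far stronger than what the statement requires.

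For $A_1$, the Rice formula produces
$$\PP(A_1)\leq \int_B \E\bigl[|\det X''(t)|\,\mathbf{1}_{X''(t)\prec 0,\, X(t)\geq u+1} \bigm| X'(t)=0\bigr] p_{X'(t)}(0)\,dt,$$
and the Gaussian tail estimate on $X(t)$ combined with polynomial growth of conditional Hessian moments (guaranteed by the non-degeneracy Assumption A.v) yields an upper bound of order $u^n\varphi(u+1)$, which is negligible because $\varphi(u+1)/\varphi(u)\to 0$ super-exponentially. For $A_2$ I would apply the second factorial-moment Rice formula to pairs $(s,t)\in \overset{\circ}{B}\times \overset{\circ}{B}$ of local maxima. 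Assumption A.iv ensures the conditional density of $(X(s),X(t))$ given $X'(s)=X'(t)=0$ stays bounded away from the diagonal and, after a Taylor expansion near the diagonal, remains integrable there. The joint tail of two Gaussians exceeding $u$ is $O(\varphi(u)^2)$, again negligible.

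The substantive work is $A_3$ and $A_4$. The crucial structural observation is that stationarity together with $\textnormal{Var}(X(t))=1$ and $\textnormal{Var}(X'(t))=I_n$ forces the regression of $X''(t)$ onto $X(t)$ to equal $-X(t)\, I_n$; therefore, conditional on $X(t)=v$ and $X'(t)=0$, the random matrix $X''(t)+vI_n$ has a Gaussian law that is non-degenerate (by A.v) and independent of $v$. Consequently, for any $\gamma\in\mathcal{S}^{n-1}$, the residual $\gamma^T X''(t)\gamma + X(t)$ is a centred Gaussian with $O(1)$ variance. Using the $\mathcal{C}^3$ regularity, one has $|\gamma^T X''(s)\gamma - \gamma^T X''(t)\gamma|\leq u^{-\beta}\sup_B\|X'''\|$ for $s\in B(t,u^{-\beta})$, so the event $A_3$ forces this centred Gaussian residual to exceed $u^\alpha - O(u^{-\beta})$. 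Since $\beta>(1-\alpha)/2$, the Gaussian tail is $\exp(-\Omega(u^{2\alpha}))$. Integrating through the Rice formula for local maxima with value in $(u,u+1)$ then produces a bound of order $\varphi(u)\exp(-\Omega(u^{2\alpha}))$, which is negligible. The event $A_4$ is handled symmetrically, using the maximum rather than the minimum of $\gamma^T X''(s)\gamma$.

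The main obstacle will be converting the pointwise (in $\gamma$ and in $s$) analysis sketched above into a uniform estimate over the sphere $\mathcal{S}^{n-1}$ and over the shrinking ball $B(t,u^{-\beta})$. This requires (i) a finite $\gamma$-net on $\mathcal{S}^{n-1}$ whose cardinality grows only polynomially in $u$, so that the union bound does not spoil the super-exponential decay, and (ii) a uniform modulus-of-continuity bound on $X'''$, which follows from the a.s.\ $\mathcal{C}^3$ assumption through the Borell--Sudakov--Tsirelson inequality (Lemma 2) applied to $\sup_B\|X'''\|$. Together with the uniform non-degeneracy of the residual ensured by Assumption A.v, this completes the negligibility of $A_3$ and $A_4$.
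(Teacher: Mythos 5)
First, a point of comparison: the paper does not prove this lemma at all --- it is quoted from Aza\"is and Wschebor \cite{7} ("The following lemma is a recent result of...") --- so there is no internal proof to measure you against, and your attempt must be judged on its own merits. Your overall strategy (Rice-formula moment bounds combined with conditional Gaussian tail estimates for the Hessian residual) is indeed the one used in \cite{7}, your treatment of $A_1$ is fine, and your key structural observation that $\E\bigl[X''(t)\mid X(t)=v,\,X'(t)=0\bigr]=-v\,I_n$ is correct and is exactly the right starting point for $A_3$ and $A_4$.

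There are, however, two genuine gaps. The most serious one is that your argument for $A_3$/$A_4$ never uses the hypothesis $\beta>(1-\alpha)/2$, which should have been a warning sign. When you bound $|\gamma^TX''(s)\gamma-\gamma^TX''(t)\gamma|$ by $u^{-\beta}\sup_B\|X'''\|$ and conclude that the centred residual must cross a threshold $u^{\alpha}-O(u^{-\beta})$, you are implicitly treating $\sup_B\|X'''\|$ as $O(1)$; but the computation must be carried out conditionally on $X(t)=x\approx u$, $X'(t)=0$, and under that conditioning the mean of $\gamma^TX''(s)\gamma$ for $s\in B(t,u^{-\beta})$, $\gamma=(s-t)/\|s-t\|$, is $x\,\partial_\gamma^2 r(s-t)=-x+\tfrac{1}{2}\textnormal{Var}(X''_{11})\,x\,\|s-t\|^2+o(x\|s-t\|^2)$. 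The drift away from $-x$ is therefore of order $u^{1-2\beta}$, not $u^{-\beta}$, and for $A_4$ it pushes the conditional mean \emph{towards} the threshold $-X(t)+u^{\alpha}$: the residual only needs to exceed $u^{\alpha}-O(u^{1-2\beta})$. This is a large-deviation event precisely when $1-2\beta<\alpha$, i.e.\ when $\beta>(1-\alpha)/2$; for smaller $\beta$ the threshold can become negative and the whole estimate collapses. Any correct proof must invoke this hypothesis at this point. The second gap is in $A_2$: the claim that the joint tail of two Gaussians exceeding $u$ is $O(\varphi(u)^2)$ is false where it matters most, namely near the diagonal, since as $s\to t$ the correlation tends to $1$ and $\PP(X(s)>u,\,X(t)>u)$ is of the same order as a single exceedance. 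Off the diagonal the right bound is $\exp(-\theta u^2/2)$ with $\theta>1$ (this is Lemma 2 of the paper); near the diagonal the saving comes not from the Gaussian tail but from the degeneration of the conditional determinant factor in the second factorial-moment density, which vanishes as $s\to t$ fast enough to beat the blow-up of the density of $(X'(s),X'(t))$ at $(0,0)$. Showing that this yields $o(u^{-1}\varphi(u))$ (in fact a super-exponentially small bound) is the technical heart of \cite{7} and \cite{11}; "integrability near the diagonal" is not a sufficient substitute for that quantitative estimate.
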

Let us comment  on  Lemma \ref{leaz}.  Consider the event  $ \{M>u\} \cap  A_1^c \cap \cdots \cap A_4^c$ 
 that differs  from  the event of interest $ \{M>u\}$ by a negligible probability.
 
 Because  we are in $A_2^c $, there exists  at most in $B$ one  local maximum    with  a value larger than $u$.  This implies that the excursion set:
$$K_u :=\{s\in B: \, X(s) \geq u\}$$
 consists of  one connected component. Moreover, because we are in $A_1^c  \cap  A_4^c  $,  and thanks to a Taylor expansion: 
 $$X(s)=X(t)+\frac{1}{2}\|s-t\|^2\gamma^T X''(\eta)\gamma,$$
  this   component  is included  in:
 \begin{equation} \label{overline}
 B(t,\overline{r}) \ \ ; \mbox{  with } \ \ \overline{r}=\sqrt{2\frac{X(t)-u}{u-u^{\alpha}}}
 \end{equation}
 and where $t$ is the location of the local maximum. 
 
  This implies  that, for $u$ large enough, $t$ lies in $  \br$. Using the fact that we are in $A_3^c$, we obtain, in the same manner,
  $$ B(t,r)  \subset K_u  
  $$
   with 
  $$  
   \underline{r}=\sqrt{2\frac{X(t)-u}{X(t)+u^{\alpha}}}$$
   
    Eventually, we obtain:
$$
 \PP(\exists \,\mbox{a local maxima}\; t \in \overset{\circ}{B}: \, t\in S^{+\underline{r}})+o(u^{-1}\varphi(u))
 \leq \PP(M_S\geq u) 
 \leq \PP(\exists \,\mbox{a local maxima}\; t \in \overset{\circ}{B}: \, t\in S^{+\overline{r}})
 +o(u^{-1}\varphi(u)).
 $$
On the basis of  this observation, Aza\"\i s and Wschebor derived an asymptotic formula for the excursion distribution $\PP(M_S\geq u)$. For more details see \cite{7}.

\subsubsection*{Proof of Lemma \ref{le1}}
\begin{proof}
Using Lemma \ref{leaz} we have the upper-bound 
\begin{displaymath}
\begin{array}{rl}
&\displaystyle \PP\left(\forall i=1\ldots m : \;M_{S_i}\geq u\right)\\
\leq & o\left(u^{-1}\varphi(u)\right)+\PP\left(\exists t\in \overset{\circ}{B}: \; X(.)\textnormal{ \;has\; a\; local\; maximum\; at}\; t, \; X(t)>u,\; t\in \underset{i=1}{\overset{m}{\cap}} S_i^{+\overline{r}} \right)\\
 \leq & o\left(u^{-1}\varphi(u)\right)+\E\left(\textnormal{card}\left\{ t\in \overset{\circ}{B}: \; X(.)\textnormal{ \;has\; a\; local\; maximum\; at}\; t, \; X(t)>u,\; t\in \underset{i=1}{\overset{m}{\cap}} S_i^{+\overline{r}} \right\}\right).
\end{array}
\end{displaymath}

Applying  the Rice formula (see \cite[Chapter 6]{6}),
\begin{align*}
E:=& \E\left(\textnormal{card}\left\{ t\in \overset{\circ}{B}: \; X(.)\textnormal{ \;has\; a\; local\; maxima\; at}\; t, \; X(t)>u,\; t\in \underset{i=1}{\overset{m}{\cap}} S_i^{+\overline{r}} \right\}\right)\\
=&  \int_u^{+\infty}dx
\int_{\overset{\circ}{B}}
\E\left(|\det(X''(t))|\mathbb{I}_{\{X''(t)\preceq 0\}}\mathbb{I}_{\{t\in \underset{i=1}{\overset{m}{\cap}} S_i^{+\overline{r}}\}}\mid X(t)=x,\; X'(t)=0\right)p_{X(t),X'(t)}(x,0)\sigma_n(dt)
\\
=& \frac{1}{(2\pi)^{n/2}}  \int_u^{+\infty}
\sigma_n(\underset{i=1}{\overset{m}{\cap}} S_i^{+\overline{r}^*}) \E\left(|\det(X''(0))|\mathbb{I}_{\{X''(0)\preceq 0\}}\mid X(0)=x,X'(0)=0\right)\varphi(x)\, dx,
\end{align*}
where $X''(0)\preceq 0$ means that the  matrix $X''(0)$ is semi definite negative, $p_{X(t),X'(t)}(x,0)$ is the value of the joint density function of the random vector $(X(t),X'(t))$ at the point $(x,0)$, and $\overline{r}^*$ is the value of $\overline{r}$  given by (\ref{overline}) when $X(t)=x$. We use the stationary property of the field  here and the fact that $X(t)$ and $X'(t)$ are two independent Gaussian vectors.

Using  the following result (see Aza\"\i s and Delmas \cite{3}):
$$\E\left(|\det(X''(0))|\mathbb{I}_{\{X''(0)\preceq 0\}}\mid X(0)=x,X'(0)=0\right)=x^n+ O\left(x^{n-2}\right)\; \textnormal{as} \; x\rightarrow \infty ,$$
and hypothesis (\ref{inters}), we have, since we are in $A_1^c$:
\begin{displaymath}
\begin{array}{rl}
E= & \displaystyle \frac{1}{(2\pi)^{n/2}}\int_u^{u+1}x^n\varphi(x)C\left[2\frac{x-u}{u-u^{\alpha}}\right]^{(n-d)/2}dx+ o\left(u^{d-1}\varphi(u)\right)\\
= & \displaystyle \frac{Cu^{(n+d)/2}}{2^{d/2}\pi^{n/2}}\int_u^{u+1}\varphi(x)(x-u)^{(n-d)/2}dx +o\left(u^{d-1}\varphi(u)\right). \\
\end{array}
\end{displaymath}
By the change of variable $x=u+y/u$,
\begin{displaymath}
\begin{array}{rl}
E= & \displaystyle \frac{C}{2^{d/2}\pi^{n/2}}u^{d-1}\varphi(u)\int_0^{u}\exp\left(-y-\frac{y^2}{2u^2}\right)y^{(n-d)/2}dy+ o\left(u^{d-1}\varphi(u)\right) \\
= & \displaystyle u^{d-1}\varphi(u) \left( \frac{C}{2^{d/2}\pi^{n/2}}\Gamma(1+(n-d)/2)+o(1)\right).
\end{array}
\end{displaymath}
We then obtain the upper bound as above.

For the lower bound, we see that
\begin{displaymath}
\begin{array}{rl}
&\displaystyle \PP\left(\forall i=1\ldots m : \; M_{S_i}\geq u\right)\\
\geq & o\left(u^{-1}\varphi(u)\right)+\PP\left(\exists t\in \overset{\circ}{B}: \; X(.)\textnormal{ \;has\; a\; local\; maximum\; at}\; t, \;  X(t)>u,\; t\in \underset{i=1}{\overset{m}{\cap}} S_i^{+\underline{r}} \right).\\
\end{array}
\end{displaymath}
Set
$$\mathcal{M}^{\underline{r}}=\textnormal{card}\left\{ t\in \overset{\circ}{B}: \; X(.)\textnormal{ \;has\; a\; local\; maximum\; at}\; t, \; X(t)>u,\; t\in \underset{i=1}{\overset{m}{\cap}} S_i^{+\underline{r}} \right\}.$$
It is proven in \cite{11} or \cite{7} that
$$0\leq \E(\mathcal{M}^{\underline{r}})-\PP(\mathcal{M}^{\underline{r}}\geq 1)\leq \E(\mathcal{M}^{\underline{r}}(\mathcal{M}^{\underline{r}}-1))/2\leq \E(\mathcal{M}_u(\mathcal{M}_u-1))/2=o\left(u^{-1}\varphi(u)\right),$$
where 
$$\mathcal{M}_{u}=\textnormal{card}\left\{ t\in \overset{\circ}{B}: \; X(.)\textnormal{ \;has\; a\; local\; maximum\; at}\; t, \; X(t)> u\right\}.$$
Then
$$\PP\left(\min_i\{M_{S_i}\}\geq u\right)\geq o\left(u^{-1}\varphi(u)\right)+\E(\mathcal{M}^{\underline{r}}).$$
Here, using  the Rice formula  again and by the same arguments, we obtain the same equivalent formula for both the upper and lower bounds. The result  then follows.
\end{proof}

 \subsection{ SFH property for sets with positive reach } 
In this section, we prove that a compact connected set in $\R^2$ with piecewise-$ \mathcal{C}^2 $ boundary and without concave irregular points will satisfy the SFH. This is very similar  to the general result of Adler and Taylor, see Theorem 14.3.3 in \cite{1}. However, these authors just clarified and specified this theorem in the convex case and  we think  that there is a need to provide the following proof.

Firstly, the Steiner formula (\ref{f:steiner}) has already been established. We now consider the excursion probability. We recall the following definitions
 \begin{itemize}
\item Let $S_2$ be the interior of $S$; $S_1$ be the union of the $\mathcal{C}^2$ edges and $S_0$ be the union of the convex irregular points.
 \item For $t\in S_j$, $X'_j(t)$ and $X''_j(t)$ are  the first and second derivatives of $X$ along $S_j$ respectively; $X'_{j,N}(t)$ denotes the outward normal derivative. 
 \end{itemize}
 In our case, it is easy to see that:
$$\kappa (S)= \underset{t\in S}{\sup} \underset{s\in S,\; s\neq t}{\sup} \frac{\textnormal{dist}(s-t,C_t)}{\|s-t\|^2}<\infty.$$

In order to apply Theorem 8.12 and Corollary 8.13 of Aza\"\i s and Wschebor \cite{6}, we have to check the conditions (A1) to (A5) (see \cite[p. 185]{6}). The first three are regularity conditions that are included in  Assumption A. Note that since the edges are of dimension 1, a direct proof of  the Rice formula can be performed without assuming that they are of class $\mathcal{C}^3$ as in (A1).
\begin{itemize}
\item The condition (A4) states that the maximum is attained at a single point. It can be deduced from the Bulinskaya lemma (Proposition 6.11 in \cite{6}) since for $s\neq t$, $(X(s),X(t),X'(s),X'(t))$ has a non-degenerate distribution. 
\item The condition (A5)  states that  there is almost surely no point $t\in S$ such that $X'(t)=0$ and $\det\left(X''(t)\right)=0$. It can be deduced from Proposition 6.5 in \cite{6} applied to the process, $X'(t)$, which is $\mathcal{C}^2$.
\end{itemize}

Since all the required conditions are met, we have:

\begin{equation}\label{f:1}
\underset{u\rightarrow +\infty}{\liminf}-2u^{-2 }\log \big[\int_u^{\infty}p^E(x)dx-\PP\{M_S\geq u\}\big] \geq 1+ \underset{t\in S}{\inf}\frac{1}{\sigma_t^2+\kappa_t^2}>1,
\end{equation}
where 
\begin{itemize}
\item $p^E(x)$ is the approximation of the density of the maximum given by the Euler characteristic method. More precisely, 
 \begin{equation}\label{dens}
\begin{array}{rl}
p^E(x)=& \displaystyle \sum_{t\in S_0}\E \left(\mathbb{I}_{X'_0(t)\in \widehat{C}_{t,0}}\mid X(t)=x \right)\varphi(x)\\
&+ \displaystyle \sum_{j=1}^2 (-1)^j\int_{S_j} \E \left(\det\left(X''_j(t)\right)\mathbb{I}_{X'_{j,N}(t)\in \widehat{C}_{t,j}}\mid X(t)=x,\, X'_j(t)=0\right)\frac{\varphi(x)}{(2\pi)^{j/2}}dt,\\
\end{array}
\end{equation}
where $\widehat{C}_{t,j}$ is the dual cone of the support cone $C_t$,
$$\widehat{C}_{t,j}=\{z\in \R^2:\; \langle z,x\rangle \geq 0, \; \forall \; x\in C_t\}.$$ 

\item $ \displaystyle \sigma_t^2=\underset{s\in S\setminus\{t\}}{\sup}\frac{\textnormal{Var}\left(X(s)\mid X(t),X'(t)\right)}{(1-\textnormal{Cov}(X(s),X(t)))^2}.$

\item $  \displaystyle\kappa_t=\underset{s\in S\setminus\{t\}}{\sup}\frac{\textnormal{dist}\left(\frac{\partial}{\partial t} \textnormal{Cov}(X(s),X(t)),C_t\right)}{1-\textnormal{Cov}(X(s),X(t))}.$
\end{itemize}

We compute $p^E(x)$ as follows:
\begin{itemize}
\item When $j=2$, there is no normal space and $ X'_{2,N}(t)$ makes no sense. It is easy to see that (see, for example, Aza\"\i s and Wschebor \cite[p. 244]{6})
$$\int_{S_2} \E \left(\det\left(X''_2(t)\right)\mid X(t)=x,\, X'_2(t)=0\right)dt = \sigma_2(S) (x^2-1).$$

\item When $j=0$, $X'_{0,N}(t)\- = X'(t)$ and:
$$\E \left(\mathbb{I}_{X'(t)\in \widehat{C}_{t,0}}\mid X(t)=x \right)=\frac{\mathcal{A}(\widehat{C}_{t,0})}{2\pi};$$
where $\mathcal{A}(\widehat{C}_{t,0})$ is the angle of the cone that is equal to the discontinuity of the angle of the tangent at the irregular point $t$.

\item When $j=1$, we consider a point $t$ on an edge $L$ of the exterior boundary. Note that in this case, the support cone $C_t$ is just a half-plane, so the event $\{X'_{1,N}(t)\in \widehat{C}_{t,1}\}$ can be viewed as $\{X'_{1,N}(t) \geq 0\}$.

 At the point $t$, the second derivative along the curve can be expressed as:
$$X''_1(t)=X''_T(t)+C(t)X'_{1,N}(t),$$
where $X''_T$ is  the second derivative  in the tangent direction and $C(t)$ is the signed curvature at $t$.

 It is easy to check that the covariance function of the vector $(X''_T,X'_{1,N},X, X_1')$ is:
 $$\left( \begin{array}{cccc}
 \textnormal{Var}(X''_T) & 0&-1&0\\
 0&1&0&0\\
 -1&0&1&0\\
 0&0&0&1\\
 \end{array} \right).$$
 Therefore, for such  an edge $L$,
$$
 \E \left(X''_1(t)\mathbb{I}_{X'_{1,N}(t)\in \widehat{C}_{t,1}}\mid X(t)=x,\, X'_1(t)=0\right)
 = \E\left(\left(-x+C(t)X'_{1,N}(t)\right)\mathbb{I}_{X'_{1,N}(t)\in \widehat{C}_{t,1}} \right)
 = \frac{-x}{2}+\frac{C(t)}{\sqrt{2\pi}}
$$
and
$$-\int_{L}\E \left(X''_1(t)\mathbb{I}_{X'_{1,N}(t)\in \widehat{C}_{t,1}}\mid X(t)=x,\, X_1'(t)=0\right)\frac{\varphi(x)}{\sqrt{2\pi}}dt=\frac{\sigma_1(L)x}{2\sqrt{2\pi}}\varphi(x)-\frac{\varphi(x)}{2\pi}\int_{L}C(t)dt.$$
The quantity $-\int_{L}C(t)dt$ can be viewed as the variation of the angle of the tangent from the beginning to the end of this edge.

Since we complete a whole turn in the positive orientation:
$$\sum_{\textnormal{ irregular points of the ext. boundary}}\mathcal{A}(\widehat{C}_{t}) \quad +\sum_{\textnormal{edges of the ext. boundary}}-\int_{L_i}C(t)dt=2\pi.$$
For a point $t$ on an edge $L_i$ of the interior boundary (holes), the interpretation of the second derivative changes into:
$$X''_1(t)=X''_T(t)-C(t)X'_{1,N}(t).$$
Therefore,
$$-\int_{L_i}\E \left(X''_1(t)\mathbb{I}_{X'_{1,N}(t)\in \widehat{C}_{t,1}}\mid X(t)=x,\, X_1'(t)=0\right)\frac{\varphi(x)}{\sqrt{2\pi}}dt=\frac{\sigma_1(L_i)x}{2\sqrt{2\pi}}\varphi(x)+\frac{\varphi(x)}{2\pi}\int_{L_i}C(t)dt.$$

\end{itemize}

For the boundary of a hole inside $S$,
$$\sum_{\textnormal{irregular points}}\mathcal{A}(\widehat{C}_{t})+\sum_{\textnormal{edges}}\int_{L_i}C(t)dt=-2\pi.$$

In conclusion, substituting into (\ref{dens}), 
$$p^E(x)=\chi(S)\varphi(x)+\frac{\sigma_1(\partial S)}{2\sqrt{2\pi}}x\varphi(x)+\frac{\sigma_2(S)}{2\pi}(x^2-1)\varphi(x),$$
since the Euler characteristic $\chi(S)$ is equal to $1$ (the number of connected components) minus the number of the holes.

 Integrating $p^E(x)$, we obtain the asymptotic expansion:
$$\PP(M_S \geq u)=\chi(S)\overline{\Phi}(u)+\frac{\sigma_1(\partial S)}{2\sqrt{2\pi}}\varphi(u)+\frac{\sigma_2(S)}{2\pi}u\varphi(u) +Rest,$$
 where $Rest$ is super-exponentially smaller in the sense of (\ref{f:1}).
 This  implies a correspondence between the asymptotic expansion and the Steiner formula.
 
 \indent  \textbf{Acknowledgements:} We would like to thank an  anonymous reviewer for very constructive remarks. The second author is funded by Vietnam National Foundation for Science and Technology Development (NAFOSTED) under grant number 101.03-2014.14.

jean-marc.azais@math.univ-toulouse.fr\\
pgviethung@gmail.com 
\end{document}